\documentclass[a4paper,12pt]{amsart}
\usepackage{amssymb,amsmath,a4wide,fullpage,setspace,microtype,textcomp,tikz,tikz-cd,accents,mathtools,abraces}
\usepackage{hyperref}
\definecolor{dark-red}{rgb}{0.5,0.15,0.15}
\hypersetup{
	colorlinks   = true, 
	urlcolor     = dark-red, 
	linkcolor    = black, 
	citecolor   = dark-red 
}
\usepackage[shortlabels]{enumitem}
\usepackage[T1]{fontenc}
\usepackage{lmodern}
\usepackage[numbers,sort&compress]{natbib}

\title{Regular clock maps and trace spaces}
\author{Philippe Gaucher}
\address{Universit\'e Paris Cit\'e, CNRS, IRIF, F-75013, Paris, France}
\urladdr{http://www.irif.fr/{\~{}}gaucher} 
\subjclass[2020]{55U35,55U99,68Q85}
\keywords{natural directed path, directed space, directed path, trace}
\swapnumbers

\makeatletter
\let\leq\@undefined
\let\geq\@undefined
\let\top\@undefined
\let\vec\@undefined
\makeatother

\newtheorem*{thmN}{Theorem}
\newtheorem*{quN}{Question}
\newtheorem{thm}{Theorem}[section]
\newtheorem{prop}[thm]{Proposition}
\newtheorem{lem}[thm]{Lemma}
\newtheorem{cor}[thm]{Corollary}
\theoremstyle{definition}
\newtheorem{definition}[thm]{Definition}
\newtheorem{rem}[thm]{Remark}

\newtheorem{nota}[thm]{Notation}
\newcommand{\bd}{\begin{definition}}
\newcommand{\ed}{\end{definition}}

\newcommand{\leq}{\leqslant}
\newcommand{\geq}{\geqslant}
\newcommand{\Top}{\mathbf{Top}}
\newcommand{\ttop}{\mathbf{Top}}
\newcommand{\DTop}{\mathbf{dTop}}
\newcommand{\SDTop}{\mathbf{sdTop}}
\newcommand{\Clock}{\mathbf{Clock}}
\newcommand{\RegClock}{\mathbf{RegClock}}

\newcommand{\I}{\mathcal I}
\newcommand{\Istar}{\mathcal I_0}
\newcommand{\Path}{\vec P}
\newcommand{\Trace}{\vec T}
\newcommand{\Sdir}{\vec S^{1}}
\newcommand{\Circle}{S^1}
\newcommand{\Disk}{D^2}
\newcommand{\Id}{\operatorname{id}}
\newcommand{\abs}[1]{\lvert #1\rvert}

\newcommand{\vec}{\overrightarrow}

\begin{document}

\begin{abstract}
A regular clock map is a regular map of directed spaces from a saturated directed space to the directed circle. We prove that the category of regular clock maps is a small-orthogonality class in the category of clock maps. Hence it is locally presentable. The geometric realization of any precubical set or transverse set gives rise to a regular clock map. Finally, we prove that for the underlying directed space of a regular clock map, the canonical quotient from directed paths to traces is always a homotopy equivalence.
\end{abstract}

\maketitle
\tableofcontents
\hypersetup{linkcolor = dark-red}

\section{Introduction}

Directed spaces are topological spaces equipped with a distinguished set of continuous paths called directed paths that are closed under nondecreasing reparametrization and composition. These spaces serve as geometric models for concurrency, with examples arising in both cubical and globular settings. 

In the cubical setting, directed spaces are realized as geometric realizations of precubical sets, where directed paths are locally nondecreasing with respect to each coordinate axis \cite{DAT_book}. In the globular setting, they appear as realizations of cellular multipointed $d$-spaces, where directed paths are segments of execution paths \cite[Theorem~4.9]{GlobularNaturalSystem}.

Each directed path in a directed space $X$ can be associated with its trace, its equivalence class under nondecreasing endpoint-preserving reparametrization. In both settings, the canonical quotient map $\vec{P}(X)(\alpha,\beta) \to \vec{T}(X)(\alpha,\beta)$ from the space of directed paths (from $\alpha$ to $\beta$) to the corresponding trace space is known to be a homotopy equivalence: for the cubical case, see \cite[Proposition~2.16]{MR2521708}, and for the globular case, see \cite[Theorem~16]{Moore3}.

However, it is unknown whether this map is always a weak homotopy equivalence for arbitrary directed spaces. This raises a critical question: if $\vec{P}(X)(\alpha,\beta)$ and $\vec{T}(X)(\alpha,\beta)$ have different homotopy types, which one should be considered the correct one? 

We partially address the issue by introducing an additional structure to the axiomatic framework of directed spaces. Instead of considering directed spaces alone, we work with regular clock maps, namely regular maps of directed spaces from a saturated directed space to the directed circle. The results of this paper are the following:

\begin{thmN}  (Corollary~\ref{cor:regclock-lp} and Theorem~\ref{thm:main})
	The category $\RegClock$ of regular clock maps is locally presentable. For the saturated underlying directed space of any regular clock map, the canonical quotient from directed paths to traces is a homotopy equivalence for any pair of states.
\end{thmN}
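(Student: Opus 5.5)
This statement bundles two results, Corollary~\ref{cor:regclock-lp} and Theorem~\ref{thm:main}, and I will prove them separately.

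\emph{Local presentability.} I first show that the category $\Clock$ of clock maps $X\to\Sdir$ is locally presentable. It is the slice category $\DTop/\Sdir$, or the slice over $\Sdir$ of the appropriate full subcategory of directed spaces, such as $\Delta$-generated ones. A slice of a locally presentable category is again locally presentable.

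Next I express the two conditions, ``saturated'' and ``regular'', as right orthogonality against a small set of maps in $\Clock$.
\begin{itemize}
\item Saturation of $X$ says that every continuous path which becomes directed after a reparametrization is already directed. This can be encoded by orthogonality with respect to maps $\vec I_\phi\to\vec I$ over $\Sdir$, where $\phi$ ranges over a small set of reparametrizations. The clock on each source and target is the composite with the given map to $\Sdir$; since there are only set-many maps $\vec I\to\Sdir$, the set of such maps stays small.
\item Regularity of the clock says that the composite of a non-constant directed path with the clock is not locally constant. I encode it by orthogonality against the maps in $\Clock$ collapsing a directed segment lying in a fibre of the clock to a point.
\end{itemize}
By the standard theorem (Ad\'amek--Rosick\'y), a small-orthogonality class in a locally presentable category is reflective and locally presentable. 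The main bookkeeping in this part is checking that these orthogonality conditions capture exactly saturation and regularity.

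\emph{Homotopy equivalence.} Let $c\colon X\to\Sdir$ be a regular clock map and fix states $\alpha,\beta$.
\begin{enumerate}
\item Lift $c$ along directed paths. For a directed path $\gamma$ from $\alpha$ to $\beta$, the composite $c\circ\gamma$ is a directed path in $\Sdir$. It therefore lifts uniquely to a nondecreasing map $\tilde c\gamma\colon[0,1]\to\mathbb R$ starting at a fixed lift of $c(\alpha)$.
\item Use regularity. It makes $\tilde c\gamma$ strictly increasing on each non-constant portion. Hence $\gamma$ factors through an arc-length-type reparametrization: there is a unique ``natural'' path $\gamma^\natural$ with $\tilde c\gamma^\natural$ affine, and $\gamma=\gamma^\natural\circ\phi_\gamma$ with $\phi_\gamma$ nondecreasing.
\item Check that $\gamma^\natural$ is directed. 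This uses saturation: $\gamma^\natural$ is continuous and is a reparametrization of a directed path, so saturation makes it directed.
\item Obtain the section. The assignment $[\gamma]\mapsto\gamma^\natural$ is well defined on traces, since equivalent paths have the same natural parametrization. It gives a section $s\colon\Trace(X)(\alpha,\beta)\to\Path(X)(\alpha,\beta)$ of the quotient $q$, with $qs=\Id$.
\item Build the homotopy $sq\simeq\Id$. Use the straight-line homotopy $\gamma\circ((1-t)\phi_\gamma+t\,\mathrm{id})$. More precisely, take $H(\gamma,t)=\gamma^\natural\circ((1-t)\phi_\gamma+t\,\mathrm{id})$, which runs from $\gamma$ at $t=0$ to $\gamma^\natural$ at $t=1$.
\end{enumerate}

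The main obstacle is continuity, namely of $\gamma\mapsto\gamma^\natural$ and of $\gamma\mapsto\phi_\gamma$ in the compact-open topology, and of $s$ on the quotient topology of $\Trace$. For this I would argue that $\phi_\gamma(u)=\tilde c\gamma(u)/\tilde c\gamma(1)$ depends continuously on $\gamma$. The total winding $\tilde c\gamma(1)$ is locally constant, and is nonzero unless $\gamma$ is constant, in which case I treat it separately. Then $\gamma^\natural(s)=\gamma(u)$ for any $u$ with $\phi_\gamma(u)=s$. Uniform convergence of the $\phi_\gamma$, together with equicontinuity arguments on compact $[0,1]$, then gives continuity of $\gamma^\natural$. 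For $s$, continuity on $\Trace$ follows because $\gamma\mapsto\gamma^\natural$ is continuous and constant on fibres of $q$.
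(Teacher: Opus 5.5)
Your proposal is correct and follows the paper's proof essentially step for step. For local presentability you take a slice and impose orthogonality against the collapse maps $\rho_s$, with saturation imposed by orthogonality inside $\DTop/\Sdir$ instead of starting from $\SDTop$. For the trace statement you use the clock lift, the factorization $\gamma=\gamma^\natural\circ\phi_\gamma$ via regularity and saturation, the section obtained from the quotient's universal property, and the straight-line homotopy of reparametrizations. There is one harmless slip: $\phi_\gamma(u)$ should be $(\tilde c\gamma(u)-a)/(\tilde c\gamma(1)-a)$ for the chosen initial lift $a$.

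The one point needing repair is continuity of $\gamma\mapsto\gamma^\natural$. Since $\abs X$ is only a (possibly non-metrizable) $\Delta$-generated space, ``uniform convergence and equicontinuity'' are not available. The argument must be recast in compact-open terms: for $(\phi',t')$ near $(\phi_\gamma,t)$, the fibre $(\phi')^{-1}(t')$ stays inside a neighbourhood $W$ of $\phi_\gamma^{-1}(t)$ with $\gamma(\overline W)$ contained in a prescribed open set. This is exactly the paper's Lemma~\ref{lem:quotient-continuity}.
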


Geometric realizations of precubical sets \cite{DAT_book} and transverse sets \cite{DirectedDegeneracy,ThickCubes} as directed spaces fit naturally into this framework, following \cite[Section~2.2.1]{MR2521708}: naturalization in Raussen's sense corresponds to clock-normalization (Definition~\ref{def:naturalization-directed-path} and Proposition~\ref{prop:normalization-continuous}). However, this approach only provides a partial solution, as the globular setting does not fit into the framework.

In \cite[Remark~2.17]{MR2521708}, Martin Raussen notes that a continuous additive length functional on the space of directed paths should enable us to recover the results of this paper. Indeed, equipping a saturated directed space with a continuous, additive, endpoint-reparametrization-invariant regular length function in the sense of Appendix~\ref{sec:stronger} suffices to show that the canonical quotient map from directed paths to their traces is a homotopy equivalence by arguments parallel to those of the present paper. Moreover, Definition~\ref{def:clock-length} confirms that such a length function always exists for the underlying directed space of any regular clock map. However, we do not pursue this approach, since the category of saturated directed spaces equipped with such a length function appears to lack convenient categorical properties. The clock map approach, by contrast, is more promising because it yields a locally presentable category. The local presentability raises the following question:  

\begin{quN}
	Let $\partial C_n=\partial\vec{[0,1]^n}\to \Sdir$ be the realization of the boundary of the $n$-cube as a regular clock map. Let $C_n=\vec{[0,1]^n}\to \Sdir$ be the realization of the $n$-cube as a regular clock map. The minimal model category structure on $\RegClock$ with respect to the set $\{\partial C_n \subseteq C_n\mid n\geq 0\}$ exists by \cite[Theorem~1.4]{henry2020minimal}. What is its geometric content ?
\end{quN} 

This model structure is particularly interesting because all geometric realizations of precubical sets and cofibrant transverse sets as directed spaces are cofibrant. Moreover, all morphisms of regular clock maps are regular, thereby preventing contraction of the directed segment---a critical point for preserving the causal structure.

The paper is organized as follows. Section~\ref{sec:reminders} recalls background material on directed spaces, saturated directed spaces, regularity, and local presentability. Section~\ref{sec:local-presentable-clock-map} introduces the notions of clock map and regular clock map, culminating with the proof that regular clock maps form a reflective and coreflective, locally presentable full subcategory of the category of clock maps. Section~\ref{sec:clock-lift-length} recalls the notion of trace space and introduces the clock length of a directed path. Section~\ref{sec:clock-normalization} introduces the key notion of clock normalization for directed paths. Section~\ref{sec:deformation-retract} establishes the necessary continuity lemmas, leading to the proof of the main theorem. Appendix~\ref{sec:stronger} explains why the regular clock map approach is strictly stronger than Raussen's length function approach.

\section{Directed spaces, saturation, and regularity}
\label{sec:reminders}

Let $\Top$ denote the cartesian closed category of $\Delta$-generated spaces or the cartesian closed category of $\Delta$-Hausdorff $\Delta$-generated topological spaces (cf. \cite[Section~2 and Appendix~B]{leftproperflow}). It is cartesian closed by a result due to Dugger and Vogt, recalled in \cite[Proposition~2.5]{mdtop}, and locally presentable by \cite[Corollary~3.7]{FR}. The right adjoint of the inclusion functor from $\Delta$-generated spaces to general topological spaces is called the $\Delta$-kelleyfication.  The internal hom, still denoted by $\ttop(-,-)$, is equipped with the $\Delta$-kelleyfication of the compact-open topology. Compact means quasicompact and Hausdorff.

The set $\I$ of nondecreasing continuous maps from $[0,1]$ to $[0,1]$ is equipped with the compact-open topology, which is $\Delta$-generated, being metrizable and locally path-connected. Let $\Istar=\{\phi\in\I\mid \phi(0)=0,\ \phi(1)=1\}$ be the subspace of maps in $\I$ fixing the endpoints.

Let $\gamma_1$ and $\gamma_2$ be two continuous maps from $[0,1]$ to some topological space such that $\gamma_1(1)=\gamma_2(0)$. The continuous map defined by 
\[
(\gamma_1 *_N \gamma_2)(t) = 
\begin{cases}
	\gamma_1(2t)& \hbox{ if }0\leq t\leq \frac{1}{2},\\
	\gamma_2(2t-1)& \hbox{ if }\frac{1}{2}\leq t\leq 1
\end{cases}
\]
is called the \textit{normalized composition}.

\bd \cite[Definition~1.1]{mg} \cite[Definition~4.1]{DAT_book} \label{def:directed_space} A \textit{directed space} is a pair $X=(\abs X,\vec{P}(X))$ consisting of a topological space $\abs X$ in $\Top$ and a set $\vec{P}(X)$ of continuous paths from $[0,1]$ to $\abs X$ satisfying the following axioms:
\begin{itemize}
	\item $\Path(X)$ contains all constant paths;
	\item $\Path(X)$ is closed under normalized composition;
	\item $\Path(X)$ is closed under reparametrization by an element of $\mathcal{I}$.
\end{itemize}
The set $\Path(X)$ is called a \textit{directed structure}. The space $\abs X$ is called the \textit{underlying topological space} or the \textit{state space}. The elements of $\Path(X)$ are called \textit{directed paths}. A morphism of directed spaces is a continuous map $f:\abs X\to |Y|$ such that for every directed path $\gamma$ of $X$, the composition $f\circ \gamma$ is a directed path of $Y$. The category of directed spaces is denoted by $\DTop$.
\ed  

The set $\Path(X)$ is equipped with the $\Delta$-kelleyfication of the compact-open topology. Let $\Path(X)(\alpha,\beta)$ denote the subspace of $\vec{P}(X)$ of directed paths $u$ such that $u(0)=\alpha$ and $u(1)=\beta$. 

The category $\DTop$ is locally presentable (see \cite[Theorem~4.2]{FR} and the remark before \cite[Proposition~3.6]{GlobularNaturalSystem}). By \cite[Remark~4.3~(ii)]{FR}, the set of directed paths of a colimit of directed spaces is given by a final structure: it is obtained by taking the closure under concatenation and reparametrization. Thus, the functor from $\DTop$ to $\Top$ taking a directed space $X=(\abs X,\Path(X))$ to its state space $\abs X$ is topological in the sense of \cite[Chapter~VI]{topologicalcat}.

\bd
The \textit{directed interval} $\vec I$ is $[0,1]$ equipped with the directed paths which are nondecreasing continuous maps $[0,1] \to [0,1]$. The \textit{directed circle} is
\[
\Sdir=\mathbb R/\mathbb Z
\]
equipped with the directed paths $c:[0,1]\to\mathbb R/\mathbb Z$ which admit a nondecreasing lift $\widetilde c:[0,1]\to\mathbb R$.  The quotient map $\pi:\mathbb R\to\mathbb R/\mathbb Z$ will be fixed throughout.
\ed

\bd\label{def:saturated} \cite[Definition~4.3]{reparam} \cite[Remark~4.3]{FR}  \cite[Definition~2.9]{zbMATH06404305}
A directed space $X$ is \textit{saturated} if the following condition holds: whenever $v:[0,1]\to\abs X$ is continuous, $\phi\in\Istar$, and $v\circ\phi\in\Path(X)$, then $v\in\Path(X)$.
\ed

Thus saturation says that if an endpoint-preserving slowdown of a path is directed, then the original path is already directed.  We denote by $\SDTop$ the full subcategory of directed spaces spanned by saturated directed spaces.

In \cite{Ziemiaski2012}, this notion leads to an isomorphism between the category of saturated directed spaces and the full subcategory of good streams of the category of Krishnan's streams in the sense of \cite{MR2545830}.

\begin{thm}\label{thm:sat-lp}
	The category $\SDTop$ of saturated directed spaces is locally presentable.
\end{thm}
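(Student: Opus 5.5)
Since $\DTop$ is locally presentable, the natural route is to show that $\SDTop$ is a small-orthogonality class in $\DTop$. By \cite[Theorem~1.39]{MR1294136} (Adámek--Rosický), any full subcategory of a locally presentable category of the form $\mathcal K^\perp$ for a \emph{set} $\mathcal K$ of morphisms is reflective and locally presentable. So the plan is to encode saturation as orthogonality with respect to one morphism, or a set of morphisms.

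\emph{The test morphism.} Let $\vec I_s$ denote $[0,1]$ with the directed structure generated by the identity path, made as small as possible. Concretely, a path $\gamma:[0,1]\to[0,1]$ is directed if and only if it is of the form $\psi$ with $\psi\in\I$; this is just $\vec I$. That is too big. Instead, for each $\phi\in\Istar$, let $\vec I_\phi$ be $[0,1]$ equipped with the smallest directed structure containing $\phi$, that is, the closure of $\{\phi\}$ together with the constant paths under reparametrization and normalized composition. There is an identity-on-points morphism $j_\phi:\vec I_\phi\to\vec I$. A map of directed spaces $\vec I_\phi\to X$ is exactly a continuous $v:[0,1]\to\abs X$ with $v\circ\phi\in\Path(X)$, because morphisms out of a structure generated by a set of paths are determined by sending the generators to directed paths. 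A map $\vec I\to X$ is a continuous $v$ with $v\in\Path(X)$, using closure under reparametrization by $\I$. Since $j_\phi$ is epi (it is the identity on $[0,1]$) and the forgetful functor to $\Top$ is faithful, uniqueness of factorization is automatic. Therefore $X\perp j_\phi$ is exactly the saturation condition for this particular $\phi$. Taking $\mathcal K=\{j_\phi\mid \phi\in\Istar\}$, which is a set because $\Istar$ is a set, we obtain $\SDTop=\mathcal K^\perp$. Local presentability and reflectivity then follow from \cite[Theorem~1.39]{MR1294136}.

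\emph{Checks and the main obstacle.} Two points need verifying.

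\begin{enumerate}
\item The directed structure on $\vec I_\phi$ must be well defined, and the universal property must hold. Both follow from the description in \cite[Remark~4.3~(ii)]{FR} of final structures: directed spaces form a topological construct, so the initial and final lifts exist.
\item Every path of $\vec I_\phi$ must actually be nondecreasing, so that $j_\phi$ really is a morphism into $\vec I$. This holds because $\phi$ is nondecreasing and the set of nondecreasing paths is closed under the generating operations.
\end{enumerate}

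I expect the main subtlety to be confirming that a morphism $\vec I_\phi\to X$ is precisely a continuous $v$ with $v\circ\phi$ directed. The morphism must send every path of the generated structure to a directed path, but each such path is built from $\phi$ by the closure operations, and $\Path(X)$ is closed under those same operations. So the condition on the generator alone suffices. If one wishes to avoid a proper-class issue, none arises: the index set is just $\Istar$.
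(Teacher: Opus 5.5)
Your argument is correct, but it follows a different route from the paper's proof. The paper works one level down. It starts from a small relational universal strict Horn theory axiomatizing $\Top$: without equality for $\Delta$-generated spaces \cite{FR}, and with equality in the $\Delta$-Hausdorff case \cite{leftproperflow}. It adds the directed-space axioms from the proof of \cite[Theorem~4.2]{FR} and the saturation axiom of \cite[Remark~4.3]{FR}. It then concludes by \cite[Theorem~5.30]{TheBook} that the category of models, namely $\SDTop$, is locally presentable.

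You instead take local presentability of $\DTop$ as known, show $\SDTop=\{j_\phi:\vec I_\phi\to\vec I\mid\phi\in\Istar\}^\perp$, and apply \cite[Theorem~1.39]{TheBook}. This is exactly the orthogonality description the paper states without proof immediately after the theorem. Your verification of it is sound:
\begin{itemize}
\item morphisms $\vec I\to X$ are exactly the directed paths of $X$;
\item morphisms $\vec I_\phi\to X$ are exactly the continuous $v$ with $v\circ\phi\in\Path(X)$, since $\{\gamma\mid v\circ\gamma\in\Path(X)\}$ is a directed structure on $[0,1]$ containing $\phi$;
\item injectivity of precomposition is automatic because $j_\phi$ is the identity on points.
\end{itemize}

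What each route buys: yours is shorter. It gives for free that $\SDTop$ is reflective in $\DTop$ and closed under limits and sufficiently filtered colimits. It also matches how the paper later handles $\RegClock\subseteq\Clock$ (Proposition~\ref{prop:orthogonality}). Its cost is that it rests on local presentability of $\DTop$, which is itself proved by the Horn-theory method. The paper's direct axiomatization also yields at once that the forgetful functor $\SDTop\to\Top$ is topological, because saturation is a Horn axiom without equality.

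One cleanup: delete your false start with ``$\vec I_s$''. It is abandoned anyway, and that notation clashes with the paper's later use of $\vec I_s$ for the constant clock map.
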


\begin{proof}
This argument follows the approach outlined in \cite[Proposition~3.6]{GlobularNaturalSystem}. The necessary material is already developed in \cite{FR}. It suffices to start from a small relational universal strict Horn theory $\mathcal{T}$ axiomatizing $\Top$:
\begin{itemize}
	\item without equality, as shown in \cite[Theorem~3.6]{FR}, when $\Top$ is the category of $\Delta$-generated spaces;
	\item with equality, as shown in \cite[Proposition~B.18]{leftproperflow}, when $\Top$ is the category of $\Delta$-Hausdorff $\Delta$-generated spaces.
\end{itemize}
The notion of directed space is axiomatized using the axioms presented in the proof of \cite[Theorem~4.2]{FR}, together with the saturation hypothesis axiomatized in \cite[Remark~4.3]{FR}. The proof is completed using \cite[Theorem~5.30]{TheBook}.
\end{proof}

Because $\SDTop$ is defined by $\mathcal{T}$ plus a small relational universal strict Horn theory without equality, the forgetful functor from $\SDTop$ to $\Top$, which maps a directed space $X = (\abs{X}, \Path(X))$ to its underlying space $\abs{X}$, is also topological.

Both $\vec I$ and $\Sdir$ are saturated directed spaces.  For $\vec I$, this follows from the fact that if $v\phi$ is nondecreasing and $\phi\in\Istar$, then $v$ is nondecreasing because $\phi$ is surjective. For $\Sdir$, one uses the same argument after lifting directed paths to $\mathbb R$.

\begin{nota} \cite[Definition~1.32]{TheBook}
	If $\mathcal C$ is a category and $\mathcal M$ is a class of morphisms of $\mathcal C$, then
	\[
	\mathcal M^{\perp}
	\]
	denotes the full subcategory of objects $C\in\mathcal C$ such that for every $m:A\to B$ in $\mathcal M$, the precomposition map
	\[
	\mathcal C(B,C)\longrightarrow \mathcal C(A,C)
	\]
	is a bijection.  If $\mathcal M$ is a set, $\mathcal M^{\perp}$ is called a \textit{small-orthogonality class}.
\end{nota}  

It is easy to see that $\SDTop$ is a small-orthogonality class with respect to the set of maps $\{{\vec I}_\phi \subset {\vec I}\mid \phi\in \Istar\}$ where ${\vec I}_\phi$ is the segment $[0,1]$ equipped with the smallest directed structure containing $\phi$ as a directed path: 
\[
\SDTop = \{{\vec I}_\phi \subset {\vec I}\mid \phi\in \Istar\}^\perp.
\]

\bd \cite[Definition~1.1]{reparam}
Let $X$ be a directed space.  A directed path $u\in\Path(X)$ is called \textit{regular} if, for every nondegenerate interval $[a,b]\subseteq[0,1]$, the restriction $u|_{[a,b]}$ is not constant.
\ed

Note that, unlike in \cite[Definition~1.1]{reparam}, constant paths are not regular in this paper.

\bd\label{def:regular-map}
A morphism $f:X\to Y$ of directed spaces is \textit{regular} if, for every $u\in\Path(X)$,
\[
  f\circ u \text{ constant}
  \quad\Longrightarrow\quad
  u \text{ constant}.
\]
Equivalently, $f$ does not collapse a nonconstant directed path to a point.
\ed

\begin{lem}\label{lem:regular-local}
Let $f:X\to Y$ be a regular map of directed spaces.  If
$u\in\Path(X)$ and $[a,b]\subseteq[0,1]$, then
\[
  f\circ u \text{ constant on }[a,b]
  \quad\Longrightarrow\quad
  u \text{ constant on }[a,b].
\]
In particular, if $u$ is a regular directed path, then $f\circ u$ is a regular directed path.
\end{lem}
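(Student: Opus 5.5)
The plan is to reduce the local statement on $[a,b]$ to the global definition of regularity by reparametrizing the restricted path so that it becomes a directed path defined on all of $[0,1]$. The case $a=b$ is trivial, so assume $a<b$.

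Consider the affine map $\phi:[0,1]\to[0,1]$, $\phi(t)=a+(b-a)t$. It is continuous and nondecreasing, so $\phi\in\I$, although it need not fix the endpoints. Since $\Path(X)$ is closed under reparametrization by elements of $\I$ (Definition~\ref{def:directed_space}), the path $v=u\circ\phi$ belongs to $\Path(X)$. Suppose $f\circ u$ is constant on $[a,b]$. Then $f\circ v=(f\circ u)\circ\phi$ is constant on $[0,1]$, because $\phi$ takes values in $[a,b]$. Regularity of $f$ (Definition~\ref{def:regular-map}) gives that $v$ is constant. As $\phi$ maps $[0,1]$ onto $[a,b]$, the restriction $u|_{[a,b]}$ is constant. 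This proves the first assertion.

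For the \emph{in particular} part, let $u$ be a regular directed path. First, $f\circ u\in\Path(Y)$ because $f$ is a morphism of directed spaces. Now suppose $f\circ u$ were constant on some nondegenerate interval $[a,b]$. By the first part, $u$ would be constant on $[a,b]$, which contradicts the regularity of $u$. Hence $f\circ u$ is regular.

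No step is a real obstacle. The only point needing care is that the reparametrization $\phi$ is allowed not to fix the endpoints. This is why the argument uses the closure under all of $\I$, not just under $\Istar$.
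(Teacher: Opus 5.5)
Your proof is correct and follows the paper's argument: you reparametrize by the affine map $t\mapsto a+(b-a)t$, which lies in $\I$ but not necessarily in $\Istar$, and apply regularity of $f$ to the resulting directed path. You also write out the \emph{in particular} clause, which the paper leaves implicit, and your deduction of it from the first part is sound.
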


\begin{proof}
If $a=b$, there is nothing to prove.  If $a<b$, let $\lambda_{a,b}:[0,1]\to[0,1]$ be the affine map $\lambda_{a,b}(t)=a+(b-a)t$.  Then $u\circ\lambda_{a,b}$ is directed, because directed paths are closed under reparametrization by elements of $\I$.  If $f\circ u$ is constant on $[a,b]$, then $f\circ u\circ\lambda_{a,b}$ is constant.  Regularity of $f$ implies that $u\circ\lambda_{a,b}$ is constant, which is exactly the assertion that $u$ is constant on $[a,b]$.
\end{proof}

\section{Clock maps, regular clock maps, and local presentability}
\label{sec:local-presentable-clock-map}

\bd
The category of \textit{clock maps} is the slice category
\[
  \Clock=\SDTop/\Sdir.
\]
Thus an object is a morphism of saturated directed spaces
\[
  p:X\longrightarrow\Sdir,
\]
and a morphism from $p:X\to\Sdir$ to $q:Y\to\Sdir$ is a morphism $f:X\to Y$ of saturated directed spaces such that $qf=p$.
\ed

\bd
A clock map $p:X\to\Sdir$ is \textit{regular} if $p$ is regular as a map of
directed spaces.  Equivalently, for every $u\in\Path(X)$,
\[
  p\circ u \text{ constant}
  \quad\Longrightarrow\quad
  u \text{ constant}.
\]
We denote by
\[
  \RegClock\subseteq\Clock
\]
the full subcategory spanned by the regular clock maps.
\ed

By Lemma~\ref{lem:regular-local}, if $p:X\to\Sdir$ is a regular clock map and $u\in\Path(X)$, then whenever $p\circ u$ is constant on a subinterval, $u$ is constant on that subinterval.

\begin{prop}\label{prop:clock-lp}
The category $\Clock=\SDTop/\Sdir$ is locally presentable.
\end{prop}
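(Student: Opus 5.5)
The plan is to combine Theorem~\ref{thm:sat-lp} with the standard fact that slices of locally presentable categories are locally presentable. By Theorem~\ref{thm:sat-lp}, $\SDTop$ is locally presentable. We also need $\Sdir$ to be an object of $\SDTop$, which was checked just after Theorem~\ref{thm:sat-lp}. We then invoke the result that for any locally presentable category $\mathcal C$ and any object $C$ of $\mathcal C$, the slice $\mathcal C/C$ is locally presentable; see e.g.\ \cite[Proposition~1.57]{TheBook}. Applied to $\mathcal C=\SDTop$ and $C=\Sdir$, this gives the statement directly.

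For completeness, the argument for that fact runs as follows.
\begin{enumerate}
\item Cocompleteness. Colimits in $\mathcal C/C$ are created by the forgetful functor $\mathcal C/C\to\mathcal C$. Given a diagram of objects $p_i:X_i\to C$, form $\mathrm{colim}\,X_i$ in $\mathcal C$. The compatible maps $p_i$ induce a unique map $\mathrm{colim}\,X_i\to C$, and this is the colimit in the slice.
\item Generators. Suppose $\mathcal C$ is locally $\lambda$-presentable with a set $\mathcal G$ of $\lambda$-presentable generators. Take as candidate generators all objects $(G,g)$ with $G\in\mathcal G$ and $g\in\mathcal C(G,C)$. This is a set, since $\mathcal C$ is locally small.
\item Presentability of the generators. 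Each $(G,g)$ is $\lambda$-presentable in $\mathcal C/C$. The reason is that $\mathcal C/C\bigl((G,g),(X,p)\bigr)$ is the fiber over $g$ of the map $\mathcal C(G,X)\to\mathcal C(G,C)$ given by composition with $p$. Fibers commute with $\lambda$-filtered colimits in $\mathbf{Set}$, and $\lambda$-filtered colimits in the slice are computed in $\mathcal C$.
\item Density. Every object $(X,p)$ is a $\lambda$-filtered colimit of such objects. Write $X$ as a $\lambda$-filtered colimit of objects of $\mathcal C$ built from the $\lambda$-presentable ones, and equip each term with the composite map to $C$.
\end{enumerate}

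Since the result is a standard citation, no step is a serious obstacle. The only points to check are that $\Sdir$ is saturated, so that the slice is formed inside $\SDTop$, and that colimits in the slice are computed in $\SDTop$ rather than in $\DTop$. The latter is automatic, because the slice only uses colimits of the ambient category $\SDTop$, which exist by Theorem~\ref{thm:sat-lp}.
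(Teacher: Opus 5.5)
Your proposal is correct and takes essentially the same approach as the paper, which likewise combines Theorem~\ref{thm:sat-lp} with the standard fact that a slice of a locally presentable category is locally presentable. Your sketch of that fact (colimits created by the forgetful functor, the generators $(G,g)$, and fibres commuting with $\lambda$-filtered colimits in sets) is a sound expansion of a step the paper only cites.
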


\begin{proof}
A slice category of a locally presentable category is locally presentable. Since $\SDTop$ is locally presentable by Theorem~\ref{thm:sat-lp}, the slice $\SDTop/\Sdir$ is locally presentable.
\end{proof}

For each point $s\in \Circle$, let
\[
  \mathbf 1_s\longrightarrow \Sdir
\]
be the clock map selecting $s$.  Let
\[
  \vec I_s\longrightarrow \Sdir
\]
be the constant clock map from $\vec I$ to $s$.  There is a unique map in
$\Clock$
\[
  \rho_s:\vec I_s\longrightarrow \mathbf 1_s.
\]

\begin{prop}\label{prop:orthogonality}
A clock map $p:X\to\Sdir$ is regular if and only if it is orthogonal to $\rho_s$ for every $s\in \Circle$.  Hence
\[
  \RegClock
  =\{\rho_s\mid s\in \Circle\}^{\perp}
\]
as a full subcategory of $\Clock$.
\end{prop}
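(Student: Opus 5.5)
We unwind orthogonality directly in the slice category $\Clock$. A morphism $\mathbf 1_s\to(p:X\to\Sdir)$ in $\Clock$ is a point $x\in\abs X$ with $p(x)=s$. A morphism $\vec I_s\to(p:X\to\Sdir)$ is a morphism of directed spaces $u:\vec I\to X$ with $p\circ u$ constant equal to $s$. Since the directed paths of $\vec I$ are exactly the elements of $\I$ and $\Path(X)$ is closed under reparametrization by $\I$, morphisms $\vec I\to X$ are the same as directed paths of $X$. So this is a directed path $u\in\Path(X)$ with $p\circ u\equiv s$. Precomposition with $\rho_s$ sends a point $x$ to the constant path $c_x$. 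Orthogonality of $p$ to $\rho_s$ therefore says exactly the following: the map $x\mapsto c_x$ from $p^{-1}(s)$ to $\{u\in\Path(X)\mid p\circ u\equiv s\}$ is a bijection.

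This map is always injective, since $c_x(0)=x$. It is well defined because constant paths are directed and $p\circ c_x\equiv s$. Orthogonality to $\rho_s$ thus reduces to surjectivity: every directed path $u$ with $p\circ u\equiv s$ is constant.

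For the first direction, assume $p$ is regular. Any $u\in\Path(X)$ with $p\circ u$ constant is constant by definition, so surjectivity holds and $p$ is orthogonal to every $\rho_s$.

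For the converse, assume $p$ is orthogonal to all $\rho_s$, and let $u\in\Path(X)$ with $p\circ u$ constant. That constant is some value $s\in\Circle$, and surjectivity for this $s$ gives $u=c_x$ for some $x$, so $u$ is constant. Hence $p$ is regular.

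The equality $\RegClock=\{\rho_s\mid s\in\Circle\}^{\perp}$ follows, and this is a small-orthogonality class since $\Circle$ is a set. The only point needing care is the identification of morphisms $\vec I\to X$ with directed paths. This uses that the directed structure of $\vec I$ is exactly $\I$ together with closure of $\Path(X)$ under reparametrization. We also note that $\vec I_s$ and $\mathbf 1_s$ are genuine objects of $\Clock$: $\vec I$ is saturated, as remarked earlier, and a point is trivially saturated. None of these steps is a real obstacle.
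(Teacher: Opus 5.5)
Your proposal is correct and follows the paper's proof essentially verbatim: you identify maps out of $\mathbf 1_s$ with points of the fibre $p^{-1}(s)$ and maps out of $\vec I_s$ with directed paths lying over the constant value $s$, observe that precomposition with $\rho_s$ is always injective, and note that surjectivity for all $s$ is exactly regularity. Your extra justifications are fine details that the paper leaves implicit: that morphisms $\vec I\to X$ are exactly the directed paths of $X$, and that $\vec I_s$ and $\mathbf 1_s$ are saturated.
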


\begin{proof}
For a clock map $p:X\to\Sdir$, a morphism $\mathbf 1_s\to p$ in $\Clock$ is precisely a state $x\in\abs X$ with $p(x)=s$.  A morphism $\vec I_s\to p$ in $\Clock$ is precisely a directed path $u\in\Path(X)$ such that $p\circ u$ is the constant path at $s$. Under precomposition with $\rho_s$, the state $x$ is sent to the constant path at $x$. The map
\[
  \Clock(\mathbf 1_s,p)\longrightarrow \Clock(\vec I_s,p)
\]
is always injective, because distinct states define distinct constant paths. It is surjective precisely when every directed path lying over the constant clock value $s$ is constant.  Requiring this for all $s\in \Circle$ is exactly regularity of $p$.
\end{proof}

\begin{cor}\label{cor:regclock-lp}
The category $\RegClock$ is locally presentable.
\end{cor}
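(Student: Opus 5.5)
The plan is to combine Proposition~\ref{prop:clock-lp} with Proposition~\ref{prop:orthogonality} and the standard theorem on small-orthogonality classes in locally presentable categories \cite[Theorem~1.39]{TheBook}. That theorem says that for any \emph{set} $\mathcal M$ of morphisms in a locally presentable category $\mathcal C$, the full subcategory $\mathcal M^{\perp}$ is reflective and locally presentable. So the proof should reduce to three verifications carried out in order.

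First, $\Clock=\SDTop/\Sdir$ is locally presentable by Proposition~\ref{prop:clock-lp}. Second, Proposition~\ref{prop:orthogonality} identifies $\RegClock$ with $\{\rho_s\mid s\in\Circle\}^{\perp}$ inside $\Clock$. Third, we must check that $\{\rho_s\mid s\in\Circle\}$ is a set rather than a proper class. This holds because it is indexed by the points of $\Circle$. It is worth confirming along the way that each $\rho_s$ really is a morphism of $\Clock$. The objects $\mathbf 1_s$ and $\vec I_s$ are saturated: $\vec I$ was shown to be saturated above, and a one-point directed space is trivially saturated. The map $\rho_s$ commutes with the structure maps to $\Sdir$ because both are constant at $s$.

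With these three facts in hand, \cite[Theorem~1.39]{TheBook} applied to $\mathcal C=\Clock$ and $\mathcal M=\{\rho_s\mid s\in\Circle\}$ shows that $\RegClock$ is a reflective, locally presentable full subcategory of $\Clock$. This is the claim.

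There is no real obstacle here. The only point needing care is that the orthogonality in Proposition~\ref{prop:orthogonality} is taken in $\Clock$ itself, and not in $\DTop$ or in $\SDTop$. This is what lets the cited theorem be applied in the ambient category already known to be locally presentable.
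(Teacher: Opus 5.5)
Your proposal is correct and is exactly the paper's argument: $\Clock$ is locally presentable by Proposition~\ref{prop:clock-lp}, $\RegClock=\{\rho_s\mid s\in\Circle\}^{\perp}$ is a small-orthogonality class by Proposition~\ref{prop:orthogonality} because $\Circle$ has only a set of points, and \cite[Theorem~1.39]{TheBook} concludes. Your extra check that each $\rho_s$ is a genuine morphism of $\Clock$ (using saturation of $\vec I$ and of the one-point space) is a harmless addition that the paper leaves implicit.
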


\begin{proof}
By Proposition~\ref{prop:clock-lp}, $\Clock$ is locally presentable.  By Proposition~\ref{prop:orthogonality}, $\RegClock$ is a small-orthogonality class in $\Clock$, because $\Circle$ has a set of points.  Therefore $\RegClock$ is locally presentable by \cite[Theorem~1.39]{TheBook}.
\end{proof}

Let
\[
U:\RegClock\longrightarrow \SDTop
\]
be the forgetful functor which sends a regular clock map $p:X\to \Sdir$ to its underlying saturated directed space $X$. This functor has a right adjoint, but no left adjoint. The right adjoint sends a saturated directed space $X$ to the regular clock map
\[
R(X)\longrightarrow \Sdir
\]
whose underlying continuous map is the first projection
\[
\pi_1:\Sdir\times X\longrightarrow \Sdir .
\]
The directed paths of $R(X)$ are the paths $(\lambda,u):[0,1]\to \Sdir\times X$ such that $\lambda$ is directed in $\Sdir$, $u$ is directed in $X$, and, for every interval $[a,b]\subseteq [0,1]$,
\[
\lambda \text{ constant on }[a,b]
\quad\Longrightarrow\quad
u \text{ constant on }[a,b].
\]
This directed structure is saturated, and the projection $\pi_1:R(X)\to \Sdir$ is regular by construction. If $p:Y\to \Sdir$ is a regular clock map, then every directed map $f:Y\to X$ induces a unique morphism of regular clock maps
\[
\widetilde f:Y\longrightarrow R(X),
\qquad
\widetilde f(y)=(p(y),f(y)).
\]
Indeed, if $v\in \vec P(Y)$, then
\[
\widetilde f v=(pv,fv),
\]
and whenever $pv$ is constant on an interval, regularity of $p$ implies that $v$, and hence $fv$, is constant on that interval. Conversely, every morphism $Y\to R(X)$ over $\Sdir$ is uniquely of this form by projection to the second factor. Hence there is a natural bijection
\[
\RegClock(Y,R(X))
\cong
\SDTop(U(Y),X),
\]
so $U\dashv R$. On the other hand, $U$ has no left adjoint in general, because a functor with a left adjoint must preserve limits, whereas $U$ does not preserve binary products. For example, the product of
\[
\operatorname{id}_{\Sdir}:\Sdir\to \Sdir
\]
with itself in $\RegClock$ is the pullback over $\Sdir$, hence is again $\Sdir\to\Sdir$. Its image under $U$ is $\Sdir$. But the product of the two underlying saturated directed spaces in $\SDTop$ is $\Sdir\times \Sdir$, which is not isomorphic to $\Sdir$. Therefore $U$ does not preserve products, and consequently it cannot have a left adjoint.

Let $J:\RegClock\hookrightarrow \Clock$ be the inclusion functor. It has both a left adjoint and a right adjoint. Let $p:X\to \Sdir$ be a clock map. The value $L(p)$ of the left adjoint is obtained by collapsing all vertical directed paths. More precisely, let $\sim_p$ be the smallest equivalence relation on $|X|$ such that
\[
u(s)\sim_p u(t)
\]
for every $u\in \vec P(X)$ with $p\circ u$ constant and every $s,t\in [0,1]$. Since $\sim_p$ is contained in the equivalence relation of belonging to the same fibre of $p$, the clock map $p$ factors uniquely as
\[
X \longrightarrow X/{\sim_p}
\xlongrightarrow{\overline p} \Sdir .
\]
The directed structure on $X/{\sim_p}$ is the saturated final directed structure induced by the quotient map $X\to X/{\sim_p}$. Then $\overline p:X/{\sim_p}\to \Sdir$ is regular, and this construction defines the left adjoint $L$. Indeed, every morphism $f:p\to q$ from $p$ to a regular clock map $q:Y\to \Sdir$ sends each vertical directed path of $X$ to a vertical directed path of $Y$, hence to a constant path; therefore $f$ is constant on the $\sim_p$-classes and factors uniquely through $X/{\sim_p}$. Thus
\[
\RegClock(L(p),q)
\cong
\Clock(p,J(q)).
\]
The value $R(p)$ of the right adjoint has the same underlying $\Delta$-generated space and the same clock map $p:|X|\to \Sdir$, but a smaller directed structure. Namely, a directed path of $R(p)$ is a directed path $u\in \vec P(X)$ such that, for every interval $[a,b]\subseteq [0,1]$,
\[
p\circ u \text{ constant on }[a,b]
\quad\Longrightarrow\quad
u \text{ constant on }[a,b].
\]
Equivalently, the directed paths of $R(p)$ are the \textit{locally $p$-regular} directed paths of $X$, i.e. those having no nonconstant subpath contained in a fibre of $p$. This defines a saturated directed structure, since local $p$-regularity contains constants and is stable under normalized composition, reparametrization, and saturation. The resulting clock map $R(p)\to \Sdir$ is regular. If $q:Y\to \Sdir$ is regular and $f:q\to p$ is a morphism of clock maps, then for every $v\in \vec P(Y)$, the path $f\circ v$ is locally $p$-regular because $p\circ f=q$ and $q$ is regular. Hence $f$ factors uniquely through $R(p)$, giving the natural bijection
\[
\RegClock(q,R(p))
\cong
\Clock(J(q),p).
\]
Thus $L\dashv J\dashv R$, where $L$ is the reflection obtained by quotienting vertical directed paths and $R$ is the coreflection obtained by retaining only locally clock-regular directed paths.

\section{Traces, clock lifts, and clock length}
\label{sec:clock-lift-length}

Let $p:X\to\Sdir$ be a regular clock map.  Thus $X$ is saturated.  Fix states $\alpha,\beta\in\abs X$.

\bd[Trace space]
Let $\Trace(X)(\alpha,\beta)$ be the quotient of
$\Path(X)(\alpha,\beta)$ by the equivalence relation generated by
\[
  u\sim u\circ\phi,
  \qquad \phi\in\Istar.
\]
The quotient map is denoted
\[
  q_{\alpha,\beta}:\Path(X)(\alpha,\beta)
  \longrightarrow
  \Trace(X)(\alpha,\beta).
\]
The quotient is taken in $\Top$.
\ed

Choose, once and for all, a real number $a\in\mathbb R$ such that
\[
  \pi(a)=p(\alpha).
\]
For each $u\in\Path(X)(\alpha,\beta)$, the path $p\circ u$ is directed in
$\Sdir$, hence admits a unique nondecreasing lift
\[
  \lambda_u:[0,1]\longrightarrow\mathbb R
\]
with $\lambda_u(0)=a$.  

\bd[Clock length of a directed path] \label{def:clock-length}
	Define the \textit{clock length} of $u$ by
\[
  L(u)=\lambda_u(1)-\lambda_u(0)=\lambda_u(1)-a.
\]
\ed

\begin{lem}\label{lem:lifts-continuous}
The assignment
\[
  u\longmapsto\lambda_u
\]
defines a continuous map
\[
  \Path(X)(\alpha,\beta)
  \longrightarrow
  \ttop([0,1],\mathbb R).
\]
Consequently $L:\Path(X)(\alpha,\beta)\to\mathbb R$ is continuous.
\end{lem}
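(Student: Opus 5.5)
The plan is to show that $u\mapsto\lambda_u$ is continuous for the compact-open topology (before $\Delta$-kelleyfication), and then pass to the $\Delta$-generated topologies. The map factors as
\[
\Path(X)(\alpha,\beta)\xrightarrow{\;p\circ-\;}\ttop([0,1],\Sdir)\xrightarrow{\;\text{lift}\;}\ttop([0,1],\mathbb R).
\]
The first arrow is continuous: postcomposition with the continuous map $p$ is continuous for the compact-open topology, and $\Delta$-kelleyfication is functorial. So everything reduces to the second arrow. Restricted to paths $c$ with $c(0)=p(\alpha)$, this arrow sends $c$ to its unique continuous lift $\widetilde c$ with $\widetilde c(0)=a$.

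The lift exists and is unique by the path-lifting property of the covering $\pi:\mathbb R\to\mathbb R/\mathbb Z$. For directed $c$ this lift coincides with the nondecreasing lift required in the definition of $\Sdir$, since two lifts starting at $a$ agree.

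For continuity of lifting, I will use the standard metric on $\mathbb R/\mathbb Z$. Since $[0,1]$ is compact and both targets are metric spaces, the compact-open topology coincides with the topology of uniform convergence. Fix $c$ and suppose $c'$ satisfies $d(c(t),c'(t))<\varepsilon<1/4$ for all $t$. The function $t\mapsto\widetilde{c'}(t)-\widetilde c(t)$ is continuous. Its image under $\pi$ has distance less than $\varepsilon$ from $0$, so at each $t$ it lies in $\bigcup_{n}(n-\varepsilon,n+\varepsilon)$. These intervals are disjoint, so by connectedness of $[0,1]$ the function stays in a single interval, and that interval is the one containing $0$ because the value at $t=0$ is $a-a=0$. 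Hence $\sup_t\abs{\widetilde{c'}(t)-\widetilde c(t)}<\varepsilon$. This shows lifting is $1$-Lipschitz for the sup metrics at small scales, hence continuous.

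To finish, the compact-open-continuous composite is continuous between the $\Delta$-kelleyfications, since kelleyfication is a functor. The space $\ttop([0,1],\mathbb R)$ carries the kelleyfied compact-open topology, which is already $\Delta$-generated because it is metrizable and locally path-connected. The continuity of $L$ then follows by composing with evaluation at $1$, which is continuous $\ttop([0,1],\mathbb R)\to\mathbb R$ by cartesian closedness of $\Top$, and subtracting the constant $a$.

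The only delicate point is the bookkeeping between the subspace topology on $\Path(X)(\alpha,\beta)$, taken inside the kelleyfied $\Path(X)$, and the kelleyfied mapping spaces. I expect this to be handled entirely by functoriality of kelleyfication and the fact that the inclusion into the $\Delta$-kelleyfication is continuous. The analytic heart of the argument is the uniform-lifting estimate above.
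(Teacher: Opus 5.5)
Your proof is correct. Its overall skeleton matches the paper's: prove continuity for the compact-open topologies, pass to the $\Delta$-kelleyfications, then evaluate at $1$. The analytic core, however, is argued differently.

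The paper uses the standard covering-space argument. It subdivides $[0,1]$ so that $p\circ u$ sends each piece into an evenly covered arc, builds the lift piecewise from local inverses of $\pi$, and uses compactness to reduce to finitely many arcs.

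You instead use the translation-invariant metric on $\mathbb R/\mathbb Z$. Uniform closeness $d(c(t),c'(t))<\varepsilon<1/4$ forces the continuous difference $\widetilde{c'}-\widetilde c$ into $\bigcup_n\,]n-\varepsilon,n+\varepsilon[$. Connectedness of $[0,1]$ and the common initial value $a$ then pin it inside $]-\varepsilon,\varepsilon[$.

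What your route buys:
\begin{itemize}
\item It is short and fully rigorous as written.
\item It gives a quantitative conclusion: lifting preserves sup-distances at small scales.
\item It avoids the bookkeeping the paper leaves implicit, namely that nearby paths stay in the same arcs over the same subintervals and that the choice of sheets propagates consistently from $a$.
\item You make explicit the factorization through postcomposition with $p$ and the subspace/kelleyfication bookkeeping, which the paper compresses into one sentence.
\end{itemize}

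What the paper's route buys: it uses only the covering property, so it applies essentially verbatim to any covering map. Yours relies on the group structure and invariant metric specific to $\pi:\mathbb R\to\mathbb R/\mathbb Z$, though that is all this lemma needs.
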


\begin{proof}
The covering map $\pi:\mathbb R\to\mathbb R/\mathbb Z$ has the unique path-lifting property once the initial value is fixed.  The lifting operation is continuous for compact-open path spaces: locally on $[0,1]$, the path $p\circ u$ takes values in an evenly covered arc of $\Circle$, and the lift is obtained by composing with the corresponding local inverse of $\pi$.  Compactness of $[0,1]$ reduces the verification to finitely many such arcs.  This proves continuity for compact-open mapping spaces, hence also after $\Delta$-kelleyfication.  Continuity of $L$ follows by evaluation at $1$.
\end{proof}

\begin{lem}\label{lem:length-zero}
For $u\in\Path(X)(\alpha,\beta)$, one has $L(u)=0$ if and only if $u$
is constant.
\end{lem}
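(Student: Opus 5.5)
The plan is to prove the two implications separately, using only that $\lambda_u$ is nondecreasing and that $p$ is regular.

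\emph{Constant implies $L(u)=0$.} If $u$ is constant, then $p\circ u$ is the constant path at $p(\alpha)$. By uniqueness of lifts with initial value $a$, the lift $\lambda_u$ is the constant path at $a$. Hence $L(u)=a-a=0$.

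\emph{$L(u)=0$ implies constant.} Suppose $L(u)=0$, so $\lambda_u(1)=\lambda_u(0)=a$. Since $\lambda_u$ is nondecreasing, for every $t\in[0,1]$ we have
\[
a=\lambda_u(0)\leq\lambda_u(t)\leq\lambda_u(1)=a .
\]
Thus $\lambda_u$ is constant equal to $a$, and therefore $p\circ u=\pi\circ\lambda_u$ is the constant path at $\pi(a)=p(\alpha)$. Now $u\in\Path(X)$ and $p$ is a regular map of directed spaces. Definition~\ref{def:regular-map}, applied directly with $[a,b]=[0,1]$ and no need for Lemma~\ref{lem:regular-local}, then gives that $u$ is constant.

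Two details need care. First, the lift $\lambda_u$ used in Definition~\ref{def:clock-length} is the nondecreasing lift. By uniqueness of path lifting through the covering $\pi$ with prescribed initial value, it coincides with the unique continuous lift, so monotonicity is available. Second, a constant $u$ in $\Path(X)(\alpha,\beta)$ forces $\alpha=\beta$; this is consistent and needs no separate treatment. I expect no real obstacle: the only substantive input is that a nondecreasing real function with equal endpoint values is constant.
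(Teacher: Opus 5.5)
Your proof is correct and follows essentially the same argument as the paper. If $L(u)=0$, monotonicity forces $\lambda_u$ to be constant, so $p\circ u$ is constant and regularity of $p$ makes $u$ constant; the converse follows from uniqueness of the lift starting at $a$. (One cosmetic point: Definition~\ref{def:regular-map} involves no interval, so applying it ``with $[a,b]=[0,1]$'' is unnecessary, and the letter $a$ clashes with the chosen base point $a$.)
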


\begin{proof}
Since $\lambda_u$ is nondecreasing, $L(u)=0$ implies that $\lambda_u$ is constant.  Hence $p\circ u$ is constant.  Since $p$ is a regular clock map, $u$ is constant. Conversely, if $u$ is constant, then $p\circ u$ is constant, so its lift with initial value $a$ is constant.  Hence $L(u)=0$.
\end{proof}

\section{Clock normalization}
\label{sec:clock-normalization}

For $u\in\Path(X)(\alpha,\beta)$, define
\[
  \theta_u(t)=
  \begin{cases}
    \dfrac{\lambda_u(t)-a}{L(u)}, & L(u)>0,\\[1.2em]
    t, & L(u)=0.
  \end{cases}
\]
Then $\theta_u\in\Istar$.

\begin{lem}\label{lem:theta-continuous}
The assignment
\[
  u\longmapsto\theta_u
\]
defines a continuous map
\[
  \Path(X)(\alpha,\beta)\longrightarrow\Istar.
\]
\end{lem}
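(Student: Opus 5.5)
The plan is to split $\Path(X)(\alpha,\beta)$ into the locus where $L$ vanishes and the locus where $L$ is positive. I will show that both pieces are open, and check continuity of $u\mapsto\theta_u$ on each piece separately. The key observation is that $L(u)$ is nonnegative because $\lambda_u$ is nondecreasing. Moreover, since $\pi(\lambda_u(1))=p(u(1))=p(\beta)$ and $\pi(a)=p(\alpha)$, the value $L(u)$ always lies in the fixed discrete set $\{x\geq 0 \mid \pi(a+x)=p(\beta)\}$, which is a translate of $\mathbb Z$ intersected with $[0,\infty)$. By Lemma~\ref{lem:lifts-continuous}, $L$ is continuous, so it is locally constant on $\Path(X)(\alpha,\beta)$. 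Hence the set $L^{-1}(0)$ is open and closed, and so is its complement $L^{-1}((0,\infty))$.

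Next, I identify $L^{-1}(0)$ using Lemma~\ref{lem:length-zero}: it consists exactly of the constant paths in $\Path(X)(\alpha,\beta)$. This set is empty if $\alpha\neq\beta$, and equals the single point $\{c_\alpha\}$ if $\alpha=\beta$. On this clopen piece, $\theta_u$ is constantly the identity of $[0,1]$, so the map is trivially continuous there. The only delicate point of the whole argument is the possible discontinuity at a constant path. It is resolved exactly by this integrality of $L$, which makes the constant path an isolated point. I expect this to be the main obstacle to notice, rather than a technical one.

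On the open piece $L>0$, I write $\theta_u=(\lambda_u-a)/L(u)$. By Lemma~\ref{lem:lifts-continuous}, the maps $u\mapsto\lambda_u\in\ttop([0,1],\mathbb R)$ and $u\mapsto L(u)\in(0,\infty)$ are continuous. The map
\[
\ttop([0,1],\mathbb R)\times(0,\infty)\to\ttop([0,1],\mathbb R),\qquad (f,\ell)\mapsto (f-a)/\ell,
\]
is continuous. Indeed, for compact-open topologies on maps from the compact space $[0,1]$ to $\mathbb R$, this is the uniform metric topology, and the pointwise arithmetic operations are uniformly continuous in this metric; passing to $\Delta$-kelleyfications preserves continuity, using cartesian closedness of $\Top$ for the product. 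Composing these maps gives continuity of $u\mapsto\theta_u$ into $\ttop([0,1],\mathbb R)$.

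Finally, I check the codomain. Each $\theta_u$ is nondecreasing, with $\theta_u(0)=0$ and $\theta_u(1)=1$, so its image lies in $\Istar$. The space $\Istar$ carries the compact-open topology, which is metrizable by the uniform metric and $\Delta$-generated. This topology agrees with the subspace topology induced from the uniform topology on continuous maps $[0,1]\to\mathbb R$. Since the source $\Path(X)(\alpha,\beta)$ is $\Delta$-generated, a map from it that is continuous into the ambient space and lands in $\Istar$ is continuous into $\Istar$. Gluing the two clopen pieces then gives the lemma.
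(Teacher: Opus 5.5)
Your proposal is correct and is essentially the paper's proof. On $L^{-1}(]0,\infty[)$ the formula $(\lambda_u-a)/L(u)$ is continuous, and since $L$ is continuous with values in a translate of $\mathbb Z$ intersected with $[0,\infty[$, the set $L^{-1}(0)$ is clopen, so the constant value $\Id_{[0,1]}$ on it glues continuously. Identifying $L^{-1}(0)$ with the constant paths and checking the codomain explicitly are harmless extras. The only imprecision is cosmetic: division by $\ell$ is continuous but not uniformly continuous on $]0,\infty[$, and continuity is all you need.
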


\begin{proof}
By Lemma~\ref{lem:lifts-continuous}, the maps $u\mapsto\lambda_u$ and $u\mapsto L(u)$ are continuous.  On $L^{-1}(]0,\infty[)$, the formula
\[
  \theta_u(t)=\frac{\lambda_u(t)-a}{L(u)}
\]
therefore gives a continuous map to $\Top([0,1],[0,1])$. It remains to check the gluing along $L^{-1}(0)$.  Choose a lift $b\in\mathbb R$ of $p(\beta)$.  For every $u$, the endpoint $\lambda_u(1)$ belongs to the discrete subset $b+\mathbb Z\subset\mathbb R$. Thus the possible values of $L(u)=\lambda_u(1)-a$ form a discrete subset of $[0,\infty[$.  Since $L$ is continuous, $L^{-1}(0)$ is both open and closed in $\Path(X)(\alpha,\beta)$.  On this open-and-closed subspace, $\theta_u=\Id_{[0,1]}$, so the two formulae glue continuously.
\end{proof}

\begin{lem}\label{lem:constant-on-fibres}
For every $u\in\Path(X)(\alpha,\beta)$, the path $u$ is constant on each fibre of $\theta_u$.
\end{lem}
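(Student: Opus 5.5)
We split into the two cases of the definition of $\theta_u$. If $L(u)=0$, then $u$ is constant by Lemma~\ref{lem:length-zero}, so it is constant on every subset of $[0,1]$, in particular on each fibre of $\theta_u=\Id_{[0,1]}$. The real content is the case $L(u)>0$.

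Assume $L(u)>0$ and fix $c\in[0,1]$. Since $\theta_u$ is nondecreasing and continuous, the fibre $\theta_u^{-1}(c)$ is a closed interval $[s,t]\subseteq[0,1]$, possibly reduced to a point. If $s=t$ there is nothing to prove. Otherwise, on $[s,t]$ the function $\theta_u$ is constant. Since $\theta_u=(\lambda_u-a)/L(u)$ with $L(u)>0$, the lift $\lambda_u$ is constant on $[s,t]$. Composing with $\pi$, the path $p\circ u=\pi\circ\lambda_u$ is constant on $[s,t]$.

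We then invoke Lemma~\ref{lem:regular-local} for the regular map $p$ (regular because $p$ is a regular clock map). Since $p\circ u$ is constant on $[s,t]$, $u$ is constant on $[s,t]$, i.e.\ on the fibre $\theta_u^{-1}(c)$. As $c$ was arbitrary, this proves the claim.

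There is no serious obstacle. The only point needing a little care is that fibres of a nondecreasing continuous map are intervals, so that Lemma~\ref{lem:regular-local}, which is stated for intervals $[a,b]$, applies directly. Equivalently, one can avoid this by noting that for any $t_1<t_2$ in the same fibre, $\theta_u$ (and hence $\lambda_u$) is constant on the whole interval $[t_1,t_2]$ by monotonicity. Lemma~\ref{lem:regular-local} then applies to $[t_1,t_2]$ and gives $u(t_1)=u(t_2)$.
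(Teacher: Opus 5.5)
Your proposal is correct and follows essentially the same argument as the paper. In the case $L(u)>0$ you show that $\lambda_u$, hence $p\circ u$, is constant on the interval between two points of a fibre, and you then apply Lemma~\ref{lem:regular-local}. The only difference is cosmetic: the paper uses the pairwise formulation with $s\leq t$ that you mention as your alternative, rather than first identifying the fibre as a closed interval.
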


\begin{proof}
If $L(u)=0$, then $u$ is constant by Lemma~\ref{lem:length-zero}.  Suppose $L(u)>0$.  If $\theta_u(s)=\theta_u(t)$ with $s\leq t$, then $\lambda_u(s)=\lambda_u(t)$.  Since $\lambda_u$ is nondecreasing, $\lambda_u$ is constant on $[s,t]$.  Hence $p\circ u$ is constant on $[s,t]$.  By Lemma~\ref{lem:regular-local}, $u$ is constant on $[s,t]$. Therefore $u(s)=u(t)$.
\end{proof}

\begin{lem}\label{lem:factor}
For every $u\in\Path(X)(\alpha,\beta)$, there exists a unique continuous path
\[
  u^{\natural}:[0,1]\longrightarrow\abs X
\]
such that
\[
  u=u^{\natural}\circ\theta_u.
\]
Moreover $u^{\natural}\in\Path(X)(\alpha,\beta)$.
\end{lem}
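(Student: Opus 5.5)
We prove existence, uniqueness, continuity, and directedness of $u^{\natural}$ in turn.

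\emph{Existence and uniqueness.} Since $\theta_u\in\Istar$ is continuous and nondecreasing with $\theta_u(0)=0$ and $\theta_u(1)=1$, it is surjective onto $[0,1]$. Uniqueness of $u^{\natural}$ is then immediate: any two factorizations agree on the image of $\theta_u$, which is all of $[0,1]$. For existence, define
\[
u^{\natural}(s)=u(t)\quad\text{for any } t \text{ with } \theta_u(t)=s.
\]
This is well defined by Lemma~\ref{lem:constant-on-fibres}, and it clearly satisfies $u=u^{\natural}\circ\theta_u$.

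\emph{Continuity.} Here $\theta_u:[0,1]\to[0,1]$ is a continuous surjection from a compact space onto a Hausdorff space. It is therefore a closed map, hence a quotient map. Since $u=u^{\natural}\circ\theta_u$ is continuous, $u^{\natural}$ is continuous by the universal property of quotient maps. Continuity in $\Top$ needs no $\Delta$-generated subtlety, because $[0,1]$ is $\Delta$-generated and a quotient of $[0,1]$ in spaces is a quotient in $\Top$. The endpoint values follow from $\theta_u(0)=0$ and $\theta_u(1)=1$: we get $u^{\natural}(0)=u(0)=\alpha$ and $u^{\natural}(1)=u(1)=\beta$.

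\emph{Directedness.} This is where the hypotheses are used. We have $u^{\natural}$ continuous, $\theta_u\in\Istar$, and $u^{\natural}\circ\theta_u=u\in\Path(X)$. Since $X$ is saturated (Definition~\ref{def:saturated}), it follows directly that $u^{\natural}\in\Path(X)$, hence $u^{\natural}\in\Path(X)(\alpha,\beta)$.

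The degenerate case $L(u)=0$ is covered by the same argument with $\theta_u=\Id$ and $u^{\natural}=u$. The only genuinely delicate point is the continuity of $u^{\natural}$, which the quotient-map argument handles. No real obstacle is expected.
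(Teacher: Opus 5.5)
Your proof is correct and follows essentially the same route as the paper. Lemma~\ref{lem:constant-on-fibres} together with surjectivity of $\theta_u$ gives the unique factorization, the compact-to-Hausdorff argument makes $\theta_u$ a quotient map so that $u^{\natural}$ is continuous, and saturation of $X$ then yields $u^{\natural}\in\Path(X)(\alpha,\beta)$.
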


\begin{proof}
The map $\theta_u:[0,1]\to[0,1]$ is continuous and surjective.  Since $[0,1]$ is compact, $\theta_u$ is a quotient map.  By Lemma~\ref{lem:constant-on-fibres}, $u$ is constant on the fibres of $\theta_u$.  Therefore there exists a unique continuous path $u^{\natural}:[0,1]\to\abs X$ satisfying $u=u^{\natural}\circ\theta_u$. At this point we only know that $u^{\natural}$ is continuous.  Since $\theta_u\in\Istar$ and
\[
  u^{\natural}\circ\theta_u=u\in\Path(X),
\]
saturation of $X$ implies $u^{\natural}\in\Path(X)$.  The endpoint conditions follow from $\theta_u(0)=0$ and $\theta_u(1)=1$.
\end{proof}

\bd[Clock normalization] \label{def:naturalization-directed-path}
The directed path $u^{\natural}$ is called the \textit{clock-normalized representative} of the directed path $u$.
\ed

\begin{lem}\label{lem:natural-clock}
If $L(u)>0$, then the clock lift of $u^{\natural}$ is
\[
  \lambda_{u^{\natural}}(t)=a+L(u)t.
\]
If $L(u)=0$, then $u^{\natural}=u$ is the constant path at $\alpha$.
\end{lem}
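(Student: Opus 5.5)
The plan is to handle the two cases separately. In the case $L(u)>0$, the approach is to identify a candidate lift of $p\circ u^{\natural}$ and then invoke uniqueness of lifts.

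\textbf{Case $L(u)>0$.} Consider the path $\mu(t)=a+L(u)t$. It is nondecreasing and starts at $a$. It therefore suffices to show that $\pi\circ\mu=p\circ u^{\natural}$. Once this holds, uniqueness of the nondecreasing lift with initial value $a$ (unique path lifting for the covering $\pi$) gives $\lambda_{u^{\natural}}=\mu$.

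To compare the two maps, I use that $\theta_u$ is surjective onto $[0,1]$, which holds since $\theta_u\in\Istar$. It is then enough to check the equality after precomposing with $\theta_u$. On one side, $p\circ u^{\natural}\circ\theta_u=p\circ u=\pi\circ\lambda_u$. On the other side, $\mu(\theta_u(t))=a+L(u)\cdot\frac{\lambda_u(t)-a}{L(u)}=\lambda_u(t)$, so $\pi\circ\mu\circ\theta_u=\pi\circ\lambda_u$. The two composites agree, and surjectivity of $\theta_u$ gives $\pi\circ\mu=p\circ u^{\natural}$ on all of $[0,1]$.

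One point needs attention: $\lambda_{u^{\natural}}$ is defined with respect to the fixed choice of $a$. This requires $u^{\natural}\in\Path(X)(\alpha,\beta)$, which is Lemma~\ref{lem:factor}.

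\textbf{Case $L(u)=0$.} Lemma~\ref{lem:length-zero} shows that $u$ is constant, necessarily at $\alpha$. By definition, $\theta_u=\Id_{[0,1]}$ in this case. The identity $u=u^{\natural}\circ\theta_u$ then forces $u^{\natural}=u$.

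There is no real obstacle here. The only subtle point is justifying that an equality of maps precomposed with $\theta_u$ descends to the original maps, and this follows directly from surjectivity of $\theta_u$.
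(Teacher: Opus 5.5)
Your proof is correct and uses the same ingredients as the paper: uniqueness of lifts with initial value $a$, the identity $\lambda_u=a+L(u)\theta_u$, and surjectivity of $\theta_u$. The only difference is order. You check $\pi\circ\mu=p\circ u^{\natural}$ in the circle and then lift, whereas the paper first gets $\lambda_{u^{\natural}}\circ\theta_u=\lambda_u$ by uniqueness and then cancels $\theta_u$ in $\mathbb R$; the case $L(u)=0$ is handled identically.
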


\begin{proof}
If $L(u)=0$, the assertion follows from Lemma~\ref{lem:length-zero} and from $\theta_u=\Id_{[0,1]}$.  Suppose $L(u)>0$.  Since $u=u^{\natural}\circ\theta_u$, uniqueness of the lift with initial value $a$ gives
\[
  \lambda_u(t)=\lambda_{u^{\natural}}(\theta_u(t)).
\]
By definition of $\theta_u$,
\[
  \lambda_u(t)=a+L(u)\theta_u(t).
\]
Since $\theta_u$ is surjective,
\[
  \lambda_{u^{\natural}}(s)=a+L(u)s
\]
for every $s\in[0,1]$.
\end{proof}

Let
\[
  \Path^{\natural}(X)(\alpha,\beta)
  \subseteq
  \Path(X)(\alpha,\beta)
\]
be the subspace consisting of paths $v$ such that either $v$ is constant, or the lift of $p\circ v$ starting at $a$ is affine:
\[
  \lambda_v(t)=a+L(v)t.
\]
By Lemma~\ref{lem:natural-clock}, $u^{\natural}\in \Path^{\natural}(X)(\alpha,\beta)$ for every $u$.

\section{Continuity, deformation retraction, and traces}
\label{sec:deformation-retract}

We shall use the following continuity lemma for monotone quotient maps.

\begin{lem}\label{lem:quotient-continuity}
Let $Y$ be a $\Delta$-generated space.  Let $A$ be the subspace of $\ttop([0,1],Y)\times\Istar$ consisting of pairs $(f,\theta)$ such that $f$ is constant on every fibre of $\theta$.  For each $(f,\theta)\in A$, let $\overline f$ be the unique continuous path satisfying
\[
  f=\overline f\circ\theta.
\]
Then
\[
  (f,\theta)\longmapsto\overline f
\]
defines a continuous map
\[
  A\longrightarrow\ttop([0,1],Y).
\]
\end{lem}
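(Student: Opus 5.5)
The plan is to reduce continuity to a statement about compact Hausdorff parameter spaces, where it becomes a standard fact about quotient maps. Since $A$ carries the $\Delta$-kelleyfication of the subspace topology, it is $\Delta$-generated. A map out of a $\Delta$-generated space is continuous as soon as its composite with every continuous map $\sigma:\Delta^n\to A$ from a topological simplex is continuous. So I fix such a $\sigma$, writing $\sigma(k)=(f_k,\theta_k)$ for $k\in K:=\Delta^n$. It then suffices to show that $k\mapsto\overline{f_k}$ is continuous from $K$ to $\ttop([0,1],Y)$.

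Because $\ttop$ is cartesian closed and $K$, $[0,1]$ are compact Hausdorff and $\Delta$-generated, the exponential law turns this into continuity of the adjoint map
\[
G:K\times[0,1]\longrightarrow Y,\qquad G(k,s)=\overline{f_k}(s).
\]
By the same exponential law, continuity of $\sigma$ gives two continuous maps:
\[
\Phi:K\times[0,1]\to Y,\ (k,t)\mapsto f_k(t),
\qquad
\Theta:K\times[0,1]\to K\times[0,1],\ (k,t)\mapsto(k,\theta_k(t)).
\]
For $\Theta$ I use the second component $K\to\Istar\subseteq\ttop([0,1],[0,1])$, together with the fact that the topology of $\Istar$ agrees with the one induced from the internal hom.

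Next, $\Theta$ is surjective, since each $\theta_k\in\Istar$ is surjective onto $[0,1]$. It is also a continuous map from a compact space to a Hausdorff space, hence closed, hence a quotient map. The fibres of $\Theta$ are $\{k\}\times\theta_k^{-1}(s)$, and $\Phi$ is constant on them by the defining condition of $A$. Hence $\Phi$ factors uniquely and continuously through $\Theta$, and the factorization is $G$, because $\Phi(k,t)=\overline{f_k}(\theta_k(t))$. This gives continuity of $G$, and therefore of $k\mapsto\overline{f_k}$.

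The main obstacle I expect is purely point-set and concerns the first two steps. One must make sure that testing against simplices is legitimate for the subspace $A$ in the $\Delta$-generated (possibly $\Delta$-Hausdorff) setting. One must also check that the exponential law identifies continuous maps $K\to\ttop([0,1],Y)$ with continuous maps $K\times[0,1]\to Y$, where the product is taken in $\ttop$. For compact Hausdorff $K$ and $[0,1]$ the product in $\ttop$ coincides with the ordinary product, so the compactness argument above applies verbatim; this is the point I would verify carefully.
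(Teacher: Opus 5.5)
\textbf{Verdict.} Your proof is correct, and it takes a different route from the paper's.

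\textbf{The paper's route.} The paper also passes to the adjoint $A\times[0,1]\to Y$. It then proves continuity at each point $(f,\theta,t)$ by hand. Using monotonicity of $\theta$, it shows that for $\theta'$ near $\theta$ and $t'$ near $t$, the fibre $(\theta')^{-1}(t')$ stays inside a prescribed neighbourhood $W$ of $\theta^{-1}(t)$. This requires a separate case analysis when the fibre contains $0$ or $1$. The paper then pairs this with the compact-open neighbourhood $\{f'\mid f'(\overline W)\subseteq U\}$.

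\textbf{Your route.} You exploit that $A$ is $\Delta$-generated, so it suffices to test on simplices. On a compact Hausdorff probe $K$, the map $\Theta(k,t)=(k,\theta_k(t))$ is a closed surjection, hence a quotient map. The factorization of $\Phi$ through $\Theta$ is then continuous for free.

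\textbf{Comparison.}
\begin{itemize}
\item Your argument is shorter and needs no endpoint case analysis.
\item It never uses monotonicity of the $\theta_k$, only their surjectivity, so it applies verbatim to arbitrary continuous surjections $[0,1]\to[0,1]$.
\item The paper's argument proves something stronger: continuity for the ordinary, non-kelleyfied compact-open subspace topology on $A$. It does not rely on the $\Delta$-generated structure of $A$, which your reduction to simplices needs essentially.
\end{itemize}

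\textbf{The point-set issues you flag are harmless.} First, $\Delta^n\times[0,1]$ is a polytope, so the product in $\Top$ is the ordinary product there. Second, you can bypass the exponential law in $\Top$ altogether. Use the classical exponential law for the locally compact Hausdorff exponent $[0,1]$. Combine it with the fact that a map out of the $\Delta$-generated space $K$ is continuous into $\ttop([0,1],Y)$ if and only if it is continuous into the unkelleyfied compact-open topology.
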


\begin{proof}
Let
\[
Q:A\longrightarrow \ttop([0,1],Y),
\qquad
Q(f,\theta)=\overline f .
\]
By the exponential law in the category of $\Delta$-generated spaces, it is enough to prove that the adjoint map
\[
\widetilde Q:A\times [0,1]\longrightarrow Y,
\qquad
\widetilde Q(f,\theta,t)=\overline f(t),
\]
is continuous. Fix a point $(f,\theta,t)\in A\times[0,1]$, and let $U\subseteq Y$ be an open neighborhood of $\overline f(t)$. Put
\[
F=\theta^{-1}(t).
\]
Since $\theta$ is continuous and nondecreasing, $F$ is a nonempty compact interval. Since $f=\overline f\theta$, the map $f$ is constant on $F$, and
\[
f(F)=\{\overline f(t)\}\subseteq U.
\]
Thus $F\subseteq f^{-1}(U)$. Choose an open neighborhood $W$ of $F$ in $[0,1]$ such that
\[
\overline W\subseteq f^{-1}(U).
\]
Equivalently,
\[
f(\overline W)\subseteq U.
\]
We shall now construct explicitly a neighborhood of $(\theta,t)$ such that the fibre of the nearby map over the nearby value remains contained in $W$. Write
\[
F=[a,b].
\]
We first consider the case
\[
0<a\leq b<1.
\]
Choose $r,s\in[0,1]$ such that
\[
r<a\leq b<s
\qquad\text{and}\qquad
F\subseteq ]r,s[\subseteq [r,s]\subseteq W.
\]
Since $F=\theta^{-1}(t)$ and $\theta$ is nondecreasing, we have
\[
\theta(r)<t<\theta(s).
\]
Choose
\[
0<\varepsilon<
\frac{1}{3}\min\{\,t-\theta(r),\,\theta(s)-t\,\}.
\]
Let
\[
\mathcal U_\theta
=
\left\{
\theta'\in \Istar
\ \middle|\
\theta'(r)\in]\theta(r)-\varepsilon,\theta(r)+\varepsilon[,
\ \theta'(s)\in]\theta(s)-\varepsilon,\theta(s)+\varepsilon[
\right\}.
\]
This is an open neighborhood of $\theta$ in the compact-open topology on $\Istar$. Let
\[
J=]t-\varepsilon,t+\varepsilon[\cap[0,1].
\]
If $\theta'\in\mathcal U_\theta$ and $t'\in J$, then
\[
\theta'(r)<t'<\theta'(s).
\]
Indeed,
\[
\theta'(r)<\theta(r)+\varepsilon<t-2\varepsilon<t'
\]
and similarly
\[
t'<t+\varepsilon<\theta(s)-2\varepsilon<\theta'(s).
\]
Since $\theta'$ is nondecreasing, no point $x\leq r$ can satisfy $\theta'(x)=t'$, and no point $x\geq s$ can satisfy $\theta'(x)=t'$. Hence
\[
(\theta')^{-1}(t')\subseteq ]r,s[\subseteq W .
\]
Now consider the endpoint case
\[
F=[0,b],
\qquad b<1.
\]
Choose $s>b$ such that
\[
[0,s]\subseteq W.
\]
Then $\theta(s)>t$. Choose
\[
0<\varepsilon<\frac{1}{3}(\theta(s)-t),
\]
put
\[
\mathcal U_\theta
=
\left\{
\theta'\in \Istar
\ \middle|\
\theta'(s)\in]\theta(s)-\varepsilon,\theta(s)+\varepsilon[
\right\},
\]
and put
\[
J=]t-\varepsilon,t+\varepsilon[\cap[0,1].
\]
For every $\theta'\in\mathcal U_\theta$ and every $t'\in J$, one has
\[
t'<\theta'(s).
\]
Therefore, by monotonicity of $\theta'$, no point $x\geq s$ belongs to $(\theta')^{-1}(t')$. Hence
\[
(\theta')^{-1}(t')\subseteq [0,s[\subseteq W .
\]
The other endpoint case is similar. If
\[
F=[a,1],
\qquad a>0,
\]
choose $r<a$ such that
\[
[r,1]\subseteq W.
\]
Then $\theta(r)<t$. Choose
\[
0<\varepsilon<\frac{1}{3}(t-\theta(r)),
\]
put
\[
\mathcal U_\theta
=
\left\{
\theta'\in \Istar
\ \middle|\
\theta'(r)\in]\theta(r)-\varepsilon,\theta(r)+\varepsilon[
\right\},
\]
and again put
\[
J=]t-\varepsilon,t+\varepsilon[\cap[0,1].
\]
For every $\theta'\in\mathcal U_\theta$ and every $t'\in J$, one has
\[
\theta'(r)<t'.
\]
Therefore no point $x\leq r$ belongs to $(\theta')^{-1}(t')$, and hence
\[
(\theta')^{-1}(t')\subseteq ]r,1]\subseteq W .
\]
Finally, if $F=[0,1]$, then we may replace $W$ by $[0,1]$, and there is nothing to prove: for every $\theta'\in\Istar$ and every $t'\in[0,1]$, one has
\[
(\theta')^{-1}(t')\subseteq W.
\]
In all cases we have obtained an open neighborhood
\[
\mathcal U_\theta\subseteq \Istar
\]
of $\theta$ and an open neighborhood
\[
J\subseteq [0,1]
\]
of $t$ such that, for all $\theta'\in\mathcal U_\theta$ and all
$t'\in J$,
\[
(\theta')^{-1}(t')\subseteq W.
\]
Now define an open neighborhood of $f$ in the compact-open topology by
\[
\mathcal U_f
=
\{\,f'\in \ttop([0,1],Y)\mid f'(\overline W)\subseteq U\,\}.
\]
Since $\overline W$ is compact in $[0,1]$, this is a compact-open subbasic neighborhood of $f$. The binary product in $\Top$ is obtained by applying $\Delta$-kelleyfication to the ordinary topological product. Since $\Delta$-kelleyfication only refines the topology, every ordinary product-open subset is open in the $\Delta$-generated product. Therefore \[\mathcal U_f\times \mathcal U_\theta\] is open in $\ttop([0,1],Y)\times \Istar$, and \[\bigl((\mathcal U_f\times\mathcal U_\theta)\cap A\bigr)\times J\] is open in $A\times[0,1]$. We claim that
\[
\bigl((\mathcal U_f\times \mathcal U_\theta)\cap A\bigr)\times J
\]
is mapped by $\widetilde Q$ into $U$. Let
\[
(f',\theta',t')\in
\bigl((\mathcal U_f\times \mathcal U_\theta)\cap A\bigr)\times J.
\]
Since $(f',\theta')\in A$, the map $f'$ is constant on each fibre of $\theta'$, and there exists a unique continuous path $\overline{f'}$ such that
\[
f'=\overline{f'}\theta'.
\]
Choose $s'\in(\theta')^{-1}(t')$. By construction,
\[
s'\in W\subseteq \overline W.
\]
Therefore
\[
\overline{f'}(t')
=
f'(s')
\in
f'(\overline W)
\subseteq U.
\]
Thus $\widetilde Q$ is continuous at $(f,\theta,t)$. Since this point was arbitrary, $\widetilde Q$ is continuous. By the exponential law, $Q$ is continuous. This proves continuity for compact-open mapping spaces, and therefore also after $\Delta$-kelleyfication, since $A$ is $\Delta$-generated. 
\end{proof}

\begin{prop}\label{prop:normalization-continuous}
The clock-normalization assignment
\[
  N:\Path(X)(\alpha,\beta)
  \longrightarrow
  \Path^{\natural}(X)(\alpha,\beta),
  \qquad
  N(u)=u^{\natural},
\]
is continuous.
\end{prop}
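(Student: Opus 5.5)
The plan is to write $N$ as a composite of continuous maps and then apply Lemma~\ref{lem:quotient-continuity} with $Y=\abs X$. Consider the map
\[
\Path(X)(\alpha,\beta)\longrightarrow \ttop([0,1],\abs X)\times\Istar,
\qquad u\longmapsto (u,\theta_u).
\]
Its first component is the inclusion of $\Path(X)(\alpha,\beta)$ into the mapping space. This is continuous because $\Path(X)$ carries the $\Delta$-kelleyfication of the compact-open topology, so it is finer than the subspace topology. Its second component is continuous by Lemma~\ref{lem:theta-continuous}. The pairing is continuous into the $\Delta$-generated product, because the domain is $\Delta$-generated and the product in $\Top$ is the categorical product.

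Next I will check that this map lands in the subspace $A$ of Lemma~\ref{lem:quotient-continuity}. By Lemma~\ref{lem:constant-on-fibres}, each $u$ is constant on the fibres of $\theta_u$, so $(u,\theta_u)\in A$. A continuous map from a $\Delta$-generated space into $\ttop([0,1],\abs X)\times\Istar$ whose image lies in $A$ factors continuously through $A$, whether $A$ is given the subspace topology or its $\Delta$-kelleyfication, since the $\Delta$-kelleyfication is the coreflection. Composing with the continuous map $(f,\theta)\mapsto\overline f$ from Lemma~\ref{lem:quotient-continuity} gives a continuous map $u\mapsto \overline{u}$ into $\ttop([0,1],\abs X)$. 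By the uniqueness in Lemma~\ref{lem:factor}, $\overline u=u^{\natural}$.

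Finally, the map must be corestricted to $\Path^{\natural}(X)(\alpha,\beta)$. By Lemma~\ref{lem:factor} and Lemma~\ref{lem:natural-clock}, every value lies in this subset. The topology on $\Path(X)(\alpha,\beta)$ is the $\Delta$-kelleyfication of the subspace topology inherited from $\ttop([0,1],\abs X)$. A map from a $\Delta$-generated space is continuous into the kelleyfied space exactly when it is continuous into the underlying compact-open subspace. So continuity into $\ttop([0,1],\abs X)$ with image in $\Path^{\natural}(X)(\alpha,\beta)$ gives continuity into $\Path^{\natural}(X)(\alpha,\beta)$, whether that space is taken as a subspace of $\Path(X)(\alpha,\beta)$ or kelleyfied again.

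The analytic difficulty was already handled in Lemma~\ref{lem:quotient-continuity}. The step that needs care here is the topological bookkeeping: the sequence of $\Delta$-kelleyfications of subspaces and products, and the justification that $u\mapsto(u,\theta_u)$ is continuous into $A$. The tool for this is the universal property of $\Delta$-kelleyfication as the right adjoint to the inclusion into general topological spaces. Concretely, a map from a $\Delta$-generated space into the kelleyfication of a space $Z$ is continuous exactly when it is continuous into $Z$.
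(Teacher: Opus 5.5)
Your proposal is correct and is the paper's proof: continuity of $u\mapsto(u,\theta_u)$ by Lemma~\ref{lem:theta-continuous}, landing in $A$ by Lemma~\ref{lem:constant-on-fibres}, applying Lemma~\ref{lem:quotient-continuity}, and then using Lemmas~\ref{lem:factor} and~\ref{lem:natural-clock} to land in $\Path^{\natural}(X)(\alpha,\beta)$. The only difference is that you spell out the $\Delta$-kelleyfication bookkeeping (factoring through $A$ and corestricting to the subspace), which the paper leaves implicit, and your justifications for those steps are sound.
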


\begin{proof}
By Lemma~\ref{lem:theta-continuous}, the map
\[
  u\longmapsto (u,\theta_u)
\]
from $\Path(X)(\alpha,\beta)$ to $\ttop([0,1],\abs X)\times\Istar$ is continuous.  By Lemma~\ref{lem:constant-on-fibres}, its image lies in the subspace $A$ of Lemma~\ref{lem:quotient-continuity}.  Applying Lemma~\ref{lem:quotient-continuity} gives continuity of $u\mapsto u^{\natural}$ as a map to $\ttop([0,1],\abs X)$.  By Lemma~\ref{lem:factor}, the resulting path is directed, and by Lemma~\ref{lem:natural-clock} it belongs to $\Path^{\natural}(X)(\alpha,\beta)$.
\end{proof}

For $s\in[0,1]$, define
\[
  \psi_{u,s}(t)=(1-s)\theta_u(t)+st.
\]
Then $\psi_{u,s}\in\Istar$.  Define
\[
  H:\Path(X)(\alpha,\beta)\times[0,1]
  \longrightarrow
  \Path(X)(\alpha,\beta)
\]
by
\[
  H(u,s)(t)=u^{\natural}(\psi_{u,s}(t)).
\]

\begin{prop}\label{prop:deformation}
The map $H$ is a continuous homotopy such that
\[
  H(u,0)=u,
  \qquad
  H(u,1)=u^{\natural}.
\]
Moreover, if $u\in\Path^{\natural}(X)(\alpha,\beta)$, then $H(u,s)=u$ for every $s\in[0,1]$.  Hence $\Path^{\natural}(X)(\alpha,\beta)$ is a strong deformation retract of $\Path(X)(\alpha,\beta)$.
\end{prop}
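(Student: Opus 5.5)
The plan is to verify the endpoint values and well-definedness first, then continuity, then stationarity on $\Path^{\natural}(X)(\alpha,\beta)$.

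\emph{Endpoint values and well-definedness.} For $s=0$ we have $\psi_{u,0}=\theta_u$, so $H(u,0)=u^{\natural}\circ\theta_u=u$ by Lemma~\ref{lem:factor}. For $s=1$ we have $\psi_{u,1}=\Id_{[0,1]}$, so $H(u,1)=u^{\natural}$. Since $\psi_{u,s}$ is a convex combination of two elements of $\Istar$, it lies in $\Istar$. As $u^{\natural}\in\Path(X)(\alpha,\beta)$ and directed paths are closed under reparametrization by $\I$, each $H(u,s)$ is a directed path from $\alpha$ to $\beta$. So $H$ does land in $\Path(X)(\alpha,\beta)$.

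\emph{Continuity.} By the exponential law in $\Delta$-generated spaces, it suffices that the adjoint $(u,s,t)\mapsto u^{\natural}(\psi_{u,s}(t))$ be continuous on $\Path(X)(\alpha,\beta)\times[0,1]\times[0,1]$. We factor it as follows.
\begin{itemize}
\item The map $(u,s,t)\mapsto(u^{\natural},\psi_{u,s}(t))$ is continuous. Its first component is continuous by Proposition~\ref{prop:normalization-continuous}. Its second component is built from the continuous map $u\mapsto\theta_u$ of Lemma~\ref{lem:theta-continuous}, evaluation (continuous by cartesian closedness), and arithmetic.
\item The evaluation map $\ttop([0,1],\abs X)\times[0,1]\to\abs X$ is continuous, again by cartesian closedness.
\end{itemize}
Continuity into $\ttop([0,1],\abs X)$ then gives continuity into the subspace $\Path(X)(\alpha,\beta)$. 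The subspace carries the $\Delta$-kelleyfied relative topology, and a continuous map from a $\Delta$-generated space into a subspace remains continuous for that topology.

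\emph{Stationarity on $\Path^{\natural}(X)(\alpha,\beta)$.} Let $u\in\Path^{\natural}(X)(\alpha,\beta)$. If $u$ is constant, then $L(u)=0$ by Lemma~\ref{lem:length-zero}, so $\theta_u=\Id_{[0,1]}$ and $u^{\natural}=u$ by Lemma~\ref{lem:natural-clock}. Hence $\psi_{u,s}=\Id_{[0,1]}$ and $H(u,s)=u$. Otherwise $L(u)>0$ by Lemma~\ref{lem:length-zero}, and the definition of $\Path^{\natural}$ gives $\lambda_u(t)=a+L(u)t$. Therefore $\theta_u=\Id_{[0,1]}$, so $u^{\natural}=u$ by the uniqueness part of Lemma~\ref{lem:factor}, and $\psi_{u,s}=\Id_{[0,1]}$ for all $s$. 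In both cases $H(u,s)=u$.

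\emph{Strong deformation retraction.} Combining the above, $H$ is a homotopy from the identity to $\iota\circ N$, where $\iota$ is the inclusion, and it is stationary on $\Path^{\natural}(X)(\alpha,\beta)$. Moreover $N\circ\iota=\Id$, because $N(u)=H(u,1)=u$ for $u\in\Path^{\natural}(X)(\alpha,\beta)$. This is exactly a strong deformation retraction.

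The only delicate step is continuity. It reduces to Proposition~\ref{prop:normalization-continuous}, whose proof rests on Lemma~\ref{lem:quotient-continuity}, together with the convenient-category facts about evaluation and products in $\Top$. One must apply these facts in $\Top$, not with the naive compact-open topology on products.
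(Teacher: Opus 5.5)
Your proof is correct and follows the paper's argument. You use the same endpoint computations, the same continuity of $u\mapsto\theta_u$ and $u\mapsto u^{\natural}$ combined with continuity of composition and evaluation in $\Top$, and the same observation that $\theta_u=\Id_{[0,1]}$ forces $\psi_{u,s}=\Id_{[0,1]}$ on clock-normalized paths. You are only more explicit than the paper in two places: you unpack continuity through the adjoint evaluation map, and you check $u^{\natural}=u$ for $u\in\Path^{\natural}(X)(\alpha,\beta)$, which the paper leaves implicit.
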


\begin{proof}
The map $(u,s)\mapsto\psi_{u,s}$ is continuous because $u\mapsto\theta_u$ is continuous.  Composition of paths is continuous in $\Top$, and $u\mapsto u^{\natural}$ is continuous by Proposition~\ref{prop:normalization-continuous}; hence $H$ is continuous. For every $(u,s)$, the path $H(u,s)=u^{\natural}\circ\psi_{u,s}$ is directed because $u^{\natural}$ is directed and directed paths are closed under reparametrization.  The endpoints are preserved because $\psi_{u,s}(0)=0$ and $\psi_{u,s}(1)=1$. For $s=0$, $\psi_{u,0}=\theta_u$, so
\[
  H(u,0)=u^{\natural}\circ\theta_u=u.
\]
For $s=1$, $\psi_{u,1}=\Id_{[0,1]}$, so
\[
  H(u,1)=u^{\natural}.
\]
Finally, if $u$ is clock-normalized, then $\theta_u=\Id_{[0,1]}$, and therefore $\psi_{u,s}=\Id_{[0,1]}$ for every $s$.  Thus $H(u,s)=u$.
\end{proof}

\begin{lem}\label{lem:normalization-class}
For every $u\in\Path(X)(\alpha,\beta)$ and every $\phi\in\Istar$, one has
\[
  (u\circ\phi)^{\natural}=u^{\natural}.
\]
Consequently $N(u)=u^{\natural}$ is constant on trace classes.
\end{lem}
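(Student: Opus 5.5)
Set $v=u\circ\phi$. The plan is to compare the clock data of $v$ with those of $u$ and then use the uniqueness part of Lemma~\ref{lem:factor}.

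First, the lifts. Since $p\circ v=(p\circ u)\circ\phi$ and $\lambda_u\circ\phi$ is a nondecreasing lift of $p\circ v$ starting at $\lambda_u(\phi(0))=\lambda_u(0)=a$, uniqueness of lifts with fixed initial value gives
\[
  \lambda_v=\lambda_u\circ\phi .
\]
Evaluating at $1$ and using $\phi(1)=1$ gives $L(v)=L(u)$.

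Next, treat the two cases in the definition of $\theta$.
\begin{itemize}
\item If $L(u)=0$, then $u$ is constant at $\alpha$ by Lemma~\ref{lem:length-zero}, so $v$ is constant too. By Lemma~\ref{lem:natural-clock}, both $u^{\natural}$ and $v^{\natural}$ are the constant path at $\alpha$.
\item If $L(u)>0$, then the formula for $\theta$ together with the identity above gives
\[
  \theta_v=\frac{\lambda_u\circ\phi-a}{L(u)}=\theta_u\circ\phi .
\]
\end{itemize}

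Now apply the uniqueness in Lemma~\ref{lem:factor}. By definition $v=v^{\natural}\circ\theta_v$. On the other hand,
\[
  v=u\circ\phi=u^{\natural}\circ\theta_u\circ\phi=u^{\natural}\circ\theta_v .
\]
So $u^{\natural}$ is a continuous path satisfying the factorization property that characterizes $v^{\natural}$. Uniqueness gives $v^{\natural}=u^{\natural}$. Concretely, this uniqueness comes from surjectivity of $\theta_v\in\Istar$.

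The consequence follows directly. The trace relation is generated by the pairs $u\sim u\circ\phi$ with $\phi\in\Istar$. A map that agrees on each generating pair is constant on the equivalence classes of the generated relation. Hence $N$ is constant on trace classes.

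There is no serious obstacle in this argument. The only points needing care are that the initial value $a$ is preserved, which holds because $\phi(0)=0$, and that the degenerate case $L(u)=0$ is handled separately, since there $\theta$ is defined as $\Id_{[0,1]}$ rather than by the formula, and so $\theta_v=\theta_u\circ\phi$ may fail.
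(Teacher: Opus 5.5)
Your proposal is correct and follows essentially the same route as the paper's proof. Both arguments identify $\lambda_{u\circ\phi}=\lambda_u\circ\phi$, split off the case $L(u)=0$, derive $\theta_{u\circ\phi}=\theta_u\circ\phi$, and conclude by uniqueness of the factorization $u\circ\phi=u^{\natural}\circ\theta_{u\circ\phi}$.
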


\begin{proof}
If $L(u)=0$, then $u$ is constant, and the claim is immediate.  Suppose $L(u)>0$.  The clock lift of $p\circ u\circ\phi$ starting at $a$ is $\lambda_u\circ\phi$.  Hence $L(u\circ\phi)=L(u)$, and
\[
  \theta_{u\circ\phi}(t)
  =\frac{\lambda_u(\phi(t))-a}{L(u)}
  =\theta_u(\phi(t)).
\]
Since $u=u^{\natural}\circ\theta_u$, we get
\[
  u\circ\phi
  =u^{\natural}\circ\theta_u\circ\phi
  =u^{\natural}\circ\theta_{u\circ\phi}.
\]
By uniqueness of the factorization through $\theta_{u\circ\phi}$, the clock-normalized representative of $u\circ\phi$ is $u^{\natural}$.  Since the trace relation is generated by the elementary identifications $u\sim u\circ\phi$, the final assertion follows.
\end{proof}

\begin{lem}\label{lem:natural-unique}
Every trace class contains exactly one clock-normalized path.
\end{lem}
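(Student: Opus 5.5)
We need two things for each trace class: it contains at least one clock-normalized path, and it contains at most one.

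\emph{Existence.} Let $u$ be any representative of the class. By Lemma~\ref{lem:factor} we have $u=u^{\natural}\circ\theta_u$ with $\theta_u\in\Istar$. Hence $u\sim u^{\natural}$ is one of the generating identifications, so $u^{\natural}$ lies in the trace class of $u$. By Lemma~\ref{lem:natural-clock}, $u^{\natural}\in\Path^{\natural}(X)(\alpha,\beta)$. So every class contains a clock-normalized path.

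\emph{Uniqueness.} The plan is to show that $v^{\natural}=v$ whenever $v\in\Path^{\natural}(X)(\alpha,\beta)$, and then use the invariance of $N$ on trace classes.

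First, fix such a $v$. If $v$ is constant, then $L(v)=0$ by Lemma~\ref{lem:length-zero}, so $\theta_v=\Id_{[0,1]}$ and $v^{\natural}=v$. Otherwise $L(v)>0$ and $\lambda_v(t)=a+L(v)t$, which gives $\theta_v(t)=t$. The uniqueness part of Lemma~\ref{lem:factor} then forces $v^{\natural}=v$.

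Second, suppose $v,w$ are two clock-normalized paths in the same trace class. The trace relation is the equivalence relation generated by the elementary moves $u\sim u\circ\phi$ with $\phi\in\Istar$. So $v$ and $w$ are joined by a finite zigzag $v=u_0,u_1,\dots,u_n=w$, where each consecutive pair has the form $u_{i+1}=u_i\circ\phi$ or $u_i=u_{i+1}\circ\phi$. By Lemma~\ref{lem:normalization-class}, $u_i^{\natural}=u_{i+1}^{\natural}$ at each step. By induction on $n$, $v^{\natural}=w^{\natural}$, and therefore $v=v^{\natural}=w^{\natural}=w$.

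The only delicate point is that the equivalence relation is \emph{generated} rather than given by a single reparametrization. This is handled exactly by the zigzag induction: Lemma~\ref{lem:normalization-class} already treats one elementary move in either direction, since it asserts an equality $(u\circ\phi)^{\natural}=u^{\natural}$ that is symmetric in the two paths involved. The rest is bookkeeping with the definitions of $\theta$ and $\Path^{\natural}$.
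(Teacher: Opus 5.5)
Your proof is correct and follows the paper's argument. Existence comes from the factorization $u=u^{\natural}\circ\theta_u$ of Lemma~\ref{lem:factor}, and uniqueness from the invariance of $N$ on trace classes (Lemma~\ref{lem:normalization-class}) combined with $N(v)=v$ for clock-normalized $v$; you only make explicit two steps the paper treats as immediate, namely $\theta_v=\Id_{[0,1]}$ for normalized $v$ and the zigzag induction, which is already the ``consequently'' clause of Lemma~\ref{lem:normalization-class}.
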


\begin{proof}
Existence follows from Lemma~\ref{lem:factor}: the path $u^{\natural}$ is clock-normalized and lies in the trace class of $u$, because $u=u^{\natural}\circ\theta_u$. For uniqueness, suppose $v,w\in\Path^{\natural}(X)(\alpha,\beta)$ belong to the same trace class.  By Lemma~\ref{lem:normalization-class}, the normalization map is constant on trace classes.  Therefore
\[
  v=N(v)=N(w)=w,
\]
because normalized paths are fixed by $N$.
\end{proof}

\begin{prop}\label{prop:N-homeo-Trace}
The restriction of the quotient map
\[
  q_{\alpha,\beta}\big|_{\Path^{\natural}(X)(\alpha,\beta)}:
  \Path^{\natural}(X)(\alpha,\beta)
  \longrightarrow
  \Trace(X)(\alpha,\beta)
\]
is a homeomorphism.
\end{prop}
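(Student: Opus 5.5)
The plan is to exhibit an explicit continuous inverse, built by descending the normalization map $N$ through the quotient. Write $i:\Path^{\natural}(X)(\alpha,\beta)\hookrightarrow\Path(X)(\alpha,\beta)$ for the inclusion, and set $r=q_{\alpha,\beta}\circ i$ for the map in the statement. By Lemma~\ref{lem:normalization-class}, the continuous map $N:\Path(X)(\alpha,\beta)\to\Path^{\natural}(X)(\alpha,\beta)$ of Proposition~\ref{prop:normalization-continuous} is constant on the elementary identifications $u\sim u\circ\phi$. It is therefore constant on the equivalence classes of the relation they generate. Since $\Trace(X)(\alpha,\beta)$ carries the quotient topology in $\Top$, the universal property of the quotient gives a unique continuous map
\[
\overline N:\Trace(X)(\alpha,\beta)\longrightarrow\Path^{\natural}(X)(\alpha,\beta)
\qquad\text{with}\qquad
\overline N\circ q_{\alpha,\beta}=N.
\]
One small point needs checking here: the quotient in $\Top$, i.e.\ in $\Delta$-generated spaces, still has the universal property for maps into $\Delta$-generated targets. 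This holds because colimits in $\Top$ are computed as the usual quotient followed by $\Delta$-kelleyfication, and $\Path^{\natural}(X)(\alpha,\beta)$ is a subspace of a $\Delta$-generated space, hence $\Delta$-generated with the kelleyfied subspace topology.

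Next I verify that $\overline N$ and $r$ are mutually inverse.
\begin{itemize}
\item For $v\in\Path^{\natural}(X)(\alpha,\beta)$, we have $\overline N(r(v))=N(v)=v$, because normalized paths are fixed by $N$. If $v$ is constant this follows from Lemma~\ref{lem:natural-clock}. Otherwise $\lambda_v$ is affine, so $\theta_v=\Id$, and uniqueness in Lemma~\ref{lem:factor} gives $v^{\natural}=v$.
\item For a trace class $q_{\alpha,\beta}(u)$, we have $r(\overline N(q_{\alpha,\beta}(u)))=q_{\alpha,\beta}(u^{\natural})$. This equals $q_{\alpha,\beta}(u)$ because $u=u^{\natural}\circ\theta_u$ with $\theta_u\in\Istar$, by Lemma~\ref{lem:factor}.
\end{itemize}
Surjectivity of $q_{\alpha,\beta}$ then shows $r\circ\overline N=\Id$. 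This is consistent with Lemma~\ref{lem:natural-unique}, which already gives bijectivity of $r$.

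Finally, $r$ is continuous as a composite of continuous maps. The only subtlety is the topology on $\Path^{\natural}(X)(\alpha,\beta)$: the inclusion $i$ is continuous whether one uses the plain subspace topology or its $\Delta$-kelleyfication. Thus $r$ and $\overline N$ are continuous and mutually inverse, so $r$ is a homeomorphism.

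The step I expect to be most delicate is not the algebra but the topological bookkeeping in $\Top$. One must ensure that the codomain of $N$ is given the same topology as the domain of $r$, namely the $\Delta$-kelleyfied subspace topology. One must also ensure that Proposition~\ref{prop:normalization-continuous} delivers continuity into that topology. The latter holds since a continuous map from a $\Delta$-generated space into a subspace remains continuous after kelleyfying the subspace. Everything else reduces to Lemmas~\ref{lem:factor}, \ref{lem:natural-clock} and \ref{lem:normalization-class}.
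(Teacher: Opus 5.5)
Your proposal is correct and follows the paper's proof: you use Lemma~\ref{lem:normalization-class} to descend $N$ through $q_{\alpha,\beta}$ to a continuous $\overline N$, then check that it is inverse to the restriction. The only difference is that you verify the two inverse identities directly from Lemmas~\ref{lem:factor}, \ref{lem:length-zero} and \ref{lem:natural-clock}, together with surjectivity of $q_{\alpha,\beta}$, instead of citing Lemma~\ref{lem:natural-unique}, which the paper proves from the same facts.
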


\begin{proof}
By Lemma~\ref{lem:natural-unique}, the restriction is bijective.  It is continuous because $q_{\alpha,\beta}$ is continuous. By Lemma~\ref{lem:normalization-class}, the continuous map
\[
  N:\Path(X)(\alpha,\beta)
  \longrightarrow
  \Path^{\natural}(X)(\alpha,\beta)
\]
is constant on the equivalence classes defining $\Trace(X)(\alpha,\beta)$. Therefore the universal property of the quotient gives a unique continuous map
\[
  \overline N:\Trace(X)(\alpha,\beta)
  \longrightarrow
  \Path^{\natural}(X)(\alpha,\beta)
\]
such that $\overline N\circ q_{\alpha,\beta}=N$.  This map is inverse to $q_{\alpha,\beta}|_{\Path^{\natural}(X)(\alpha,\beta)}$, by Lemma~\ref{lem:natural-unique}.  Hence the restriction of the quotient map is a homeomorphism.
\end{proof}

\begin{thm}\label{thm:main}
Let $p:X\to\Sdir$ be a regular clock map, with $X$ saturated.  Then, for every pair of states $\alpha,\beta\in\abs X$, the canonical quotient map
\[
  q_{\alpha,\beta}:\Path(X)(\alpha,\beta)
  \longrightarrow
  \Trace(X)(\alpha,\beta)
\]
is a homotopy equivalence.  In particular, it is a weak homotopy equivalence.
\end{thm}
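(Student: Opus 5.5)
The plan is to assemble the results of this section into an explicit homotopy inverse. Let $i:\Path^{\natural}(X)(\alpha,\beta)\hookrightarrow\Path(X)(\alpha,\beta)$ be the inclusion. By Proposition~\ref{prop:N-homeo-Trace}, the restriction $q_{\alpha,\beta}\circ i$ is a homeomorphism onto $\Trace(X)(\alpha,\beta)$, and its inverse is the continuous map $\overline N$ satisfying $\overline N\circ q_{\alpha,\beta}=N$. The candidate homotopy inverse of $q_{\alpha,\beta}$ is therefore
\[
g=i\circ\overline N:\Trace(X)(\alpha,\beta)\longrightarrow\Path(X)(\alpha,\beta).
\]

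First composite: $q_{\alpha,\beta}\circ g=q_{\alpha,\beta}\circ i\circ\overline N$. This equals the identity of $\Trace(X)(\alpha,\beta)$ on the nose, since $\overline N$ is the inverse of the homeomorphism $q_{\alpha,\beta}\circ i$.

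Second composite: $g\circ q_{\alpha,\beta}=i\circ\overline N\circ q_{\alpha,\beta}=i\circ N$, which is the map $u\mapsto u^{\natural}$ viewed in $\Path(X)(\alpha,\beta)$. Proposition~\ref{prop:deformation} supplies a homotopy $H$ from $H(-,0)=\Id$ to $H(-,1)=i\circ N$, staying inside $\Path(X)(\alpha,\beta)$. Hence $g\circ q_{\alpha,\beta}\simeq\Id$, and $q_{\alpha,\beta}$ is a homotopy equivalence.

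A homotopy equivalence induces bijections on all homotopy groups at every basepoint and on $\pi_0$, so $q_{\alpha,\beta}$ is also a weak homotopy equivalence.

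There is no serious obstacle remaining; the analytic work was done in Lemma~\ref{lem:quotient-continuity} and Proposition~\ref{prop:normalization-continuous}. The one point to check is that every map lives in $\Top$ with the $\Delta$-kelleyfied topologies:
\begin{itemize}
\item $\Path^{\natural}(X)(\alpha,\beta)$ carries the subspace topology, kelleyfied.
\item $\overline N$ is continuous into it by the universal property of the quotient in $\Top$.
\item $H$ is a map $\Path(X)(\alpha,\beta)\times[0,1]\to\Path(X)(\alpha,\beta)$, where the product is the $\Top$-product.
\end{itemize}
These have all been established in the preceding statements.
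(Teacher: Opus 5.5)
Your proposal is correct and follows the paper's proof essentially step for step. The paper's map $j$ is exactly your $g=i\circ\overline N$, and the paper likewise checks $q_{\alpha,\beta}\circ j=\Id_{\Trace(X)(\alpha,\beta)}$ on the nose and $j\circ q_{\alpha,\beta}=N\simeq\Id$ via the homotopy $H$ of Proposition~\ref{prop:deformation}.
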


\begin{proof}
By Proposition~\ref{prop:deformation}, the inclusion
\[
  i:\Path^{\natural}(X)(\alpha,\beta)
  \hookrightarrow
  \Path(X)(\alpha,\beta)
\]
is a strong deformation retract, with retraction $N(u)=u^{\natural}$.  By Proposition~\ref{prop:N-homeo-Trace}, the restriction
\[
  q_{\alpha,\beta}|_{\Path^{\natural}(X)(\alpha,\beta)}:
  \Path^{\natural}(X)(\alpha,\beta)
  \longrightarrow
  \Trace(X)(\alpha,\beta)
\]
is a homeomorphism. Let
\[
  j:\Trace(X)(\alpha,\beta)
  \longrightarrow
  \Path(X)(\alpha,\beta)
\]
be the composite of the inverse of this homeomorphism with the inclusion $i$.  Then
\[
  q_{\alpha,\beta}\circ j=\Id_{\Trace(X)(\alpha,\beta)}.
\]
On the other hand,
\[
  j\circ q_{\alpha,\beta}=N.
\]
The homotopy $H$ from Proposition~\ref{prop:deformation} gives
\[
  \Id_{\Path(X)(\alpha,\beta)}\simeq N=j\circ q_{\alpha,\beta}.
\]
Thus $q_{\alpha,\beta}$ is a homotopy equivalence.
\end{proof}

\begin{rem}
Saturation is used in exactly one essential place: Lemma~\ref{lem:factor}. The clock-normalization construction first produces a continuous path $u^{\natural}$ satisfying $u=u^{\natural}\circ\theta_u$.  Since $u$ is directed and $\theta_u\in\Istar$, saturation is what implies that $u^{\natural}$ is itself directed.  The rest of the proof then uses this fact repeatedly.
\end{rem}

\appendix

\section{A length function need not come from a regular clock map}
\label{sec:stronger}

The existence of a regular clock map is strictly stronger than the existence of a regular additive endpoint-reparametrization-invariant length function. Consider the closed disk $\Disk$, and identify its boundary with the directed circle $\Sdir$. Define a directed space $X$ by taking $|X|=\Disk$, and by declaring the directed paths of $X$ to be the constant paths in $\Disk$, together with the directed paths whose image is contained in the boundary $\partial \Disk\simeq \Sdir$. This directed structure is saturated: if $u\phi$ is directed for some endpoint-preserving $\phi\in\mathcal I$, then either $u\phi$ is constant, in which case $u$ is constant since $\phi$ is surjective, or $u\phi$ lies in the boundary, in which case $u$ also lies in the boundary and the assertion follows from the saturation of the usual directed circle. Define
\[
L(u)=0
\]
for constant paths, and, for a nonconstant boundary-directed path $u$, define
\[
L(u)=\widetilde u(1)-\widetilde u(0),
\]
where $\widetilde u:[0,1]\to\mathbb R$ is any nondecreasing lift of $u$ along $\mathbb R\to \Circle$. This is independent of the chosen lift. It is the usual length on the directed circle, extended by zero on constant paths. Hence $L$ is additive ($L(u*_Nv)=L(u)+L(v)$), endpoint-reparametrization invariant ($L(u\phi)=L(u)$ for $\phi\in \Istar$), continuous for the $\Delta$-kelleyfication of the compact-open topology, and regular ($L(u)=0$ if and only if $u$ is constant). Nevertheless, $X$ admits no regular clock map $p:X\to\Sdir$. Indeed, if such a map existed, then its restriction
\[
p|_{\partial \Disk}:\Sdir\longrightarrow \Sdir
\]
would be a directed map which is nonconstant on every nonconstant directed boundary path, by regularity. Choose a lift $F:\mathbb R\to\mathbb R$ of $p|_{\partial \Disk}$. Since $p|_{\partial \Disk}$ is directed, $F$ is nondecreasing and satisfies
\[
F(t+1)=F(t)+n
\]
for some integer $n\geq 0$, the degree of $p|_{\partial \Disk}$. If $n=0$, then $F$ is nondecreasing and $1$-periodic, hence constant, contradicting regularity on the boundary. Thus $n\geq 1$. But $p|_{\partial \Disk}$ extends continuously to $\Disk$, since $p$ is defined on all of $X$. Any map $\Circle\to \Circle$ extending over the disk has degree $0$, a contradiction. Therefore $X$ has a regular additive endpoint-reparametrization-invariant length function, but no regular clock map.


\begin{thebibliography}{10}
	
	\bibitem{topologicalcat}
	J.~Ad{\'a}mek, H.~Herrlich, and G.~E. Strecker.
	\newblock Abstract and concrete categories: the joy of cats.
	\newblock {\em Repr. Theory Appl. Categ.}, (17):1--507, 2006.
	\newblock Reprint of the 1990 original [Wiley, New York; MR1051419].
	
	\bibitem{TheBook}
	J.~Ad{\'a}mek and J.~Rosick{\'y}.
	\newblock {\em Locally presentable and accessible categories}.
	\newblock Cambridge University Press, Cambridge, 1994.
	\newblock \href {https://doi.org/10.1017/cbo9780511600579.004}
	{\path{https://doi.org/10.1017/cbo9780511600579.004}}.
	
	\bibitem{reparam}
	U.~Fahrenberg and M.~Raussen.
	\newblock Reparametrizations of continuous paths.
	\newblock {\em J. Homotopy Relat. Struct.}, 2(2):93--117, 2007.
	
	\bibitem{DAT_book}
	L.~Fajstrup, E.~Goubault, E.~Haucourt, S.~Mimram, and M.~Raussen.
	\newblock {\em Directed algebraic topology and concurrency. {With} a foreword
		by {Maurice} {Herlihy} and a preface by {Samuel} {Mimram}}.
	\newblock SpringerBriefs Appl. Sci. Technol. Springer, 2016.
	\newblock \href {https://doi.org/10.1007/978-3-319-15398-8}
	{\path{https://doi.org/10.1007/978-3-319-15398-8}}.
	
	\bibitem{FR}
	L.~Fajstrup and J.~Rosick{\'y}.
	\newblock A convenient category for directed homotopy.
	\newblock {\em Theory Appl. Categ.}, 21:7--20, 2008.
	
	\bibitem{mdtop}
	P.~Gaucher.
	\newblock Homotopical interpretation of globular complex by multipointed
	d-space.
	\newblock {\em Theory Appl. Categ.}, 22(22):588--621, 2009.
	
	\bibitem{leftproperflow}
	P.~Gaucher.
	\newblock Left properness of flows.
	\newblock {\em Theory Appl. Categ.}, 37(19):562--612, 2021.
	
	\bibitem{DirectedDegeneracy}
	P.~Gaucher.
	\newblock Directed degeneracy maps for precubical sets.
	\newblock {\em Theory Appl. Categ.}, 41(7):194--237, 2024.
	
	\bibitem{Moore3}
	P.~Gaucher.
	\newblock Homotopy theory of {M}oore flows ({III}).
	\newblock {\em North-West. Eur. J. Math.}, 10:55--113, 2024.
	
	\bibitem{GlobularNaturalSystem}
	P.~Gaucher.
	\newblock Natural homotopy of multipointed d-spaces.
	\newblock {\em Math. J. Okayama Univ.}, 68:13--62, 2026.
	
	\bibitem{ThickCubes}
	P.~Gaucher.
	\newblock Towards a theory of natural directed paths.
	\newblock {\em {Compositionality}}, 7(6), 2026.
	\newblock \href {https://doi.org/10.46298/compositionality-7-6}
	{\path{https://doi.org/10.46298/compositionality-7-6}}.
	
	\bibitem{mg}
	M.~Grandis.
	\newblock Directed homotopy theory. {I}.
	\newblock {\em Cah. Topol. G\'eom. Diff\'er. Cat\'eg.}, 44(4):281--316, 2003.
	
	\bibitem{henry2020minimal}
	S.~Henry.
	\newblock Minimal model structures, 2020.
	\newblock \href {https://doi.org/10.48550/arXiv.2011.13408}
	{\path{https://doi.org/10.48550/arXiv.2011.13408}}.
	
	\bibitem{zbMATH06404305}
	A.~Hirschowitz, M.~Hirschowitz, and T.~Hirschowitz.
	\newblock Saturating directed spaces.
	\newblock {\em J. Homotopy Relat. Struct.}, 9(2):273--283, 2014.
	\newblock \href {https://doi.org/10.1007/s40062-013-0025-8}
	{\path{https://doi.org/10.1007/s40062-013-0025-8}}.
	
	\bibitem{MR2545830}
	S.~Krishnan.
	\newblock A convenient category of locally preordered spaces.
	\newblock {\em Appl. Categ. Structures}, 17(5):445--466, 2009.
	\newblock \href {https://doi.org/10.1007/s10485-008-9140-9}
	{\path{https://doi.org/10.1007/s10485-008-9140-9}}.
	
	\bibitem{MR2521708}
	M.~Raussen.
	\newblock Trace spaces in a pre-cubical complex.
	\newblock {\em Topology Appl.}, 156(9):1718--1728, 2009.
	\newblock \href {https://doi.org/10.1016/j.topol.2009.02.003}
	{\path{https://doi.org/10.1016/j.topol.2009.02.003}}.
	
	\bibitem{Ziemiaski2012}
	K.~Ziemia\'{n}ski.
	\newblock Categories of directed spaces.
	\newblock {\em {Fund. Math.}}, 217(1):55--71, 2012.
	\newblock \href {https://doi.org/10.4064/fm217-1-5}
	{\path{https://doi.org/10.4064/fm217-1-5}}.
	
\end{thebibliography}

\end{document}